\title{Modules determined by their composition factors in higher homological algebra}
\author{Joseph Reid}
\date{}
\theoremstyle{definition}
\newtheorem{theorem}{Theorem}[section]
\newtheorem*{theorem*}{Theorem}
\newtheorem{corollary}[theorem]{Corollary}
\newtheorem{proposition}[theorem]{Proposition}
\newtheorem{lemma}[theorem]{Lemma}
\newtheorem{remark}[theorem]{Remark}
\newtheorem{definition}[theorem]{Definition}
\tikzset{
  big dot/.style={
    circle, inner sep=0pt, 
    minimum size=3mm, fill=black
 }
}
\tikzset{
  normal dot/.style={
    circle, inner sep=0pt, 
    minimum size=1.5mm, fill=black
 }
}
\newcommand{\homs}{\textrm{Hom}_{\mathscr{C}}}
\newcommand{\C}{\mathscr{C}}
\newcommand{\T}{\mathscr{T}}
\newcommand{\homc}{\textrm{Hom}_{\mathscr{C}}}
\newcommand{\homp}{\textrm{Hom}_\Phi}
\newcommand{\IndT}{\textrm{Ind}_{\mathscr{T}}}
\begin{document}
\maketitle
\thispagestyle{fancy}
ABSTRACT. Let $\Phi$ be a finite dimensional $K$-algebra and let $\C{} = \textrm{mod}\: \Phi$ be the abelian category of finitely generated right $\Phi$-modules. In their 1985 paper ``Modules determined by their composition factors'' \cite{AusReit}, Auslander and Reiten showed that under certain conditions modules in $\textrm{mod}\: \Phi$ are determined by their composition factors, and show an important formula related to the Auslander-Reiten translation.\\

Let $\T{}$ be a $d$-cluster tilting subcategory of $\C{}$, which by definition is also $d$-abelian. In this paper we will define the Grothendieck group for a $d$-abelian category, and show that the Grothendieck groups of $\C{}$ and $\T{}$ are isomorphic. We show also that under certain conditions, the indecomposable objects of $\T{}$ are determined up to isomorphism by their composition factors in $\C{}$. Finally, we generalise the formula from Auslander and Reiten involving the higher dimensional Auslander-Reiten translation.
\section{Introduction}

The index, an invariant on categories, has been defined for triangulated categories with cluster tilting subcategories, and generalised to $(d+2)$-angulated categories with higher analogues of cluster tilting subcategories. In this paper, we will define and explore the index with respect to a $d$-cluster tilting subcategory of an abelian category. As an application, we will provide criteria under which indecomposable modules are determined by their composition factors in higher homological algebra, see Theorem \ref{tertiaryResultThm}.\\

We set up some standing notation that will be useful throughout this document. We let $K$ be an algebraically closed field, let $\Phi$ be a finite dimensional $K$-algebra and $\C{}=\textrm{mod }\Phi$ be the abelian category of finitely generated right $\Phi$-modules. We also let $d$ be a positive integer and let $\T{} = \textrm{add}(T)$ be a $d$-cluster tilting subcategory of $\C{}$, see Definition \ref{dClusterDefn}.\\

Grothendieck groups feature heavily in this paper, see Definitions \ref{splitGrothDef} and \ref{grothDef}, and we also set up some notation regarding these. For a category $\mathscr{A}$, we denote an element of the Grothendieck group of $\mathscr{A}$ as $[a]_\mathscr{A}$, where $a \in \mathscr{A}$. The notation $[-]_{\textrm{sp}}$ shows that the element belongs to a split Grothendieck group.\\

\begin{definition}\cite[Definition~2.2]{IyamaAus}\cite[Definition~3.4]{Jasso}\label{dClusterDefn}
A \textbf{$d$-cluster tilting subcategory} $\T{}$ of $\C{}$ is a full subcategory which satisfies the following.
\begin{itemize}
\item[(i)] $\begin{aligned}[t]
\T{} &= \{a \in \C{} \: | \: \textrm{Ext}_\C{}^{1..d-1}(\T{}, a)=0\} \\
&= \{a \in \C{} \: | \: \textrm{Ext}_\C{}^{1..d-1}(a,\T{})=0\}.\end{aligned}$
\item[(ii)] $\T{}$ is functorially finite.
\end{itemize}
A \textbf{$d$-cluster tilting object} of $\C{}$ is an object $T$ such that $\T{}= \textrm{add}(T)$ is a $d$-cluster tilting category.
\end{definition}

We note that $\T{}$ as defined above is a $d$-abelian category in the sense of \cite[Definition~3.1]{Jasso}. We then recall the definition of the split Grothendieck group and the Grothendieck group.
\begin{definition}\label{splitGrothDef}
Let $\mathscr{A}$ be an essentially small additive category and $G(\mathscr{A})$ be the free abelian group on isomorphism classes $[A]$ of objects $A \in \mathscr{A}$. We define the \textbf{split Grothendieck group of $\mathscr{A}$} to be
\begin{align*}
K_0^{\textrm{sp}}(\mathscr{A}):=G(\mathscr{A})/\langle [A \oplus B] - [A]- [B] \: | \: A, B \in \mathscr{A} \rangle.
\end{align*}
When $\mathscr{A}$ is abelian or triangulated, we can also define the \textbf{Grothendieck group of $\mathscr{A}$} respectively as
\begin{align*}
K_0(\mathscr{A}) &:= K_0^{\textrm{sp}}(\mathscr{A})/\langle	[A]_\textrm{sp}-[B]_\textrm{sp}+[C]_\textrm{sp}\: |\: 0 \to A \to B \to C \to 0 \textrm{ is a short exact sequence in }\mathscr{A}\rangle \\
K_0(\mathscr{A}) &:= K_0^{\textrm{sp}}(\mathscr{A})/\langle	[A]_\textrm{sp}-[B]_\textrm{sp}+[C]_\textrm{sp}\: |\: A \to B \to C \to \Sigma A \textrm{ is a triangle in }\mathscr{A} \rangle.
\end{align*}
\end{definition}
We here make a new definition, to extend the concept of Grothendieck groups to $d$-abelian categories, as defined in \cite[Definition~3.1]{Jasso}.

\begin{definition}\label{grothDef}
Let $\mathscr{A}$ be a $d$-abelian category. Then we define the \textbf{Grothendieck group of $\mathscr{A}$} as
\begin{align*}
K_0(\mathscr{A}) &:= K_0^{\textrm{sp}}(\mathscr{A})/\langle \sum_{i=0}^{d+1} (-1)^i[A_i]_\textrm{sp} \:|\: 0 \to A_{d+1} \to \ldots \to A_0 \to 0 \textrm{ is a } d\textrm{-exact sequence in } \mathscr{A} \rangle.
\end{align*}
\end{definition}

Finally, we may define the index with respect to a $d$-cluster tilting subcategory.

\begin{definition}\label{indexDef}
Recall that for each $c \in \C{}$ there is an augmented left resolution
\begin{align*}
\ldots \to 0 \to t_{d-1} \to t_{d-2} \to \ldots \to t_0 \to c \to 0
\end{align*}
with $t_i \in \T{}$, which becomes exact under the functor $\textrm{Hom}(t', -)$ for all $t' \in \T{}$; see \cite[Theorem~2.2.3]{Iyama}. \\

We define the \textbf{index} of $c$ with respect to $\T{}$ to be the map
\begin{align*}
\IndT{}: &\C{} \to K_0^{\textrm{sp}}(\T{}) \\
&c \mapsto \sum_{i=0}^{d-1}(-1)^i[t_i]_{\textrm{sp}}.
\end{align*}
\end{definition}

We will use the index to prove the following key results:
\begingroup
\renewcommand{\thetheorem}{\Alph{theorem}}
\setcounter{theorem}{0}
\begin{theorem}\label{secondaryResultThm}(Theorem \ref{isomThm})
Let $\C{}$ and $\T{}$ be as described. Then $K_0(\C{}) \cong K_0(\T{})$.
\end{theorem}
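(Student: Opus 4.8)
The plan is to construct mutually inverse group homomorphisms between $K_0(\C{})$ and $K_0(\T{})$. Since $\T{}$ is a full subcategory of $\C{}$ closed under extensions (every $d$-exact sequence in $\T{}$ is in particular built from short exact sequences in $\C{}$), there is a natural candidate for one direction: the inclusion $\T{} \hookrightarrow \C{}$ sends an object $t$ to itself, and I would check that this descends to a well-defined homomorphism $\iota \colon K_0(\T{}) \to K_0(\C{})$. The content here is that the defining relations of $K_0(\T{})$ — coming from $d$-exact sequences $0 \to A_{d+1} \to \cdots \to A_0 \to 0$ in $\T{}$ — are mapped to relations that already hold in $K_0(\C{})$. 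This follows because a $d$-exact sequence in $\T{}$ is an exact complex in $\C{}$, which can be spliced into $d$ short exact sequences in $\C{}$; summing the resulting alternating relations in $K_0(\C{})$ telescopes to exactly $\sum_{i=0}^{d+1}(-1)^i[A_i]_\C{} = 0$.

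For the reverse direction I would use the index. The map $\IndT{} \colon \C{} \to K_0^{\textrm{sp}}(\T{})$ from Definition \ref{indexDef} assigns to each $c$ the alternating sum $\sum_{i=0}^{d-1}(-1)^i[t_i]_{\textrm{sp}}$ coming from its augmented $\T{}$-resolution. I would first compose with the quotient map $K_0^{\textrm{sp}}(\T{}) \to K_0(\T{})$ to obtain a map $\C{} \to K_0(\T{})$, and then argue that this factors through $K_0(\C{})$, i.e. that it sends short exact sequences $0 \to A \to B \to C \to 0$ in $\C{}$ to zero and respects direct sums. The direct-sum compatibility is routine since resolutions add. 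The key step is additivity on short exact sequences: given a short exact sequence in $\C{}$, I would use the horseshoe-type construction to compare the $\T{}$-resolutions of $A$, $B$, and $C$, producing a (possibly non-canonical) relation among the $t_i$ that, after passing to $K_0(\T{})$ where the $d$-exact relations are imposed, yields $\overline{\IndT{}}(B) = \overline{\IndT{}}(A) + \overline{\IndT{}}(C)$. This gives a homomorphism $\rho \colon K_0(\C{}) \to K_0(\T{})$.

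Finally I would verify that $\iota$ and $\rho$ are mutually inverse. One composite is immediate: for $t \in \T{}$ the augmented resolution can be taken to be $t$ itself concentrated in degree $0$, so $\IndT{}(t) = [t]_{\textrm{sp}}$, whence $\rho \circ \iota = \mathrm{id}_{K_0(\T{})}$. For the other composite, $\iota \circ \rho$, I would take $c \in \C{}$ with resolution $0 \to t_{d-1} \to \cdots \to t_0 \to c \to 0$ and observe that applying $\iota$ to $\overline{\IndT{}}(c) = \sum (-1)^i[t_i]$ returns $\sum(-1)^i[t_i]_\C{}$, which equals $[c]_\C{}$ in $K_0(\C{})$ because the augmented resolution is itself an exact sequence in $\C{}$ and so the telescoping argument from the first paragraph applies.

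The main obstacle I anticipate is the well-definedness of $\rho$, and specifically the additivity on short exact sequences: the $\T{}$-resolution of $c$ is not unique, and there is no canonical horseshoe lemma forcing the resolution of the middle term to be the direct sum of the outer ones. I would handle this by first proving independence of the choice of resolution (two augmented resolutions of the same object differ by contractible complexes, whose alternating sums vanish), and then establishing additivity by building a compatible triple of resolutions using the functorial finiteness of $\T{}$ and the exactness properties of the resolutions under $\textrm{Hom}(t',-)$. This is where the $d$-abelian structure and the defining relations of $K_0(\T{})$ in Definition \ref{grothDef} must be used in an essential way.
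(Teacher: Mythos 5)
Your construction of $g$ (your $\iota$) and your verification that the two composites are the identity match the paper exactly: splice the $d$-exact sequence into short exact sequences in $\C{}$ and telescope for one direction, and use $\IndT{}(t)=[t]_{\textrm{sp}}$ together with the exactness of the augmented $\T{}$-resolution in $\C{}$ for the other. The gap is in the step you yourself flag as the main obstacle: the well-definedness of the map $K_0(\C{})\to K_0(\T{})$ induced by the index. A horseshoe-type construction is not available here and cannot be repaired by functorial finiteness. The first term of the augmented $\T{}$-resolution of $C$ is a right $\T{}$-approximation $t_0\to C$, and $t_0$ is in general not projective, so there is no lift of $t_0\to C$ through $B\to C$; indeed the obstruction is exactly the contravariant defect $\delta^*(T)=\operatorname{coker}\bigl(\homc{}(T,B)\to\homc{}(T,C)\bigr)$, which is generically non-zero. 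Consequently $\IndT{}(A)-\IndT{}(B)+\IndT{}(C)$ does not vanish in $K_0^{\textrm{sp}}(\T{})$, and no choice of ``compatible'' resolutions will produce a relation among the $t_i$ that is visibly an alternating sum over $d$-exact sequences. Your proposal stops exactly where the real work begins.

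The paper closes this gap by a different mechanism that your sketch does not contain. First it proves (Theorem \ref{errorLemma}) the exact identity $\kappa\bigl(\IndT{}(A)-\IndT{}(B)+\IndT{}(C)\bigr)=[\delta^*(T)]_\Lambda$ in $K_0(\textrm{mod}\:\Lambda)$, where $\Lambda=\operatorname{End}_{\C{}}(T)$ and $\kappa$ is the isomorphism of Proposition \ref{isomRmk}. It then decomposes $[\delta^*(T)]_\Lambda$ into composition factors, uses Auslander's defect formula to show that none of these simples correspond to projective objects of $\T{}$ (Lemma \ref{compFactorLemma}), and for each remaining simple invokes the existence of a $d$-Auslander-Reiten sequence ending in the corresponding non-projective indecomposable to write its class as $\kappa$ of an alternating sum over a $d$-exact sequence in $\T{}$ (Proposition \ref{indexLemma}). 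Only then does the error term die under $\pi_{\T{}}$. If you want to complete your argument, you need some substitute for this chain --- the defect, the equivalence $\T{}\simeq\textrm{proj}\:\Lambda$, and the existence of $d$-Auslander-Reiten sequences --- none of which appears in your proposal.
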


Note that the assumptions of the following result are satisfied when $\Phi$ is $d$-representation finite, see Corollary \ref{uniqueDefCor}.
\begin{theorem}\label{tertiaryResultThm}(Theorem \ref{indexPreTheorem} and Corollary \ref{finalCor})
Suppose that $d$ is odd, and $\T{}$ is such that for any two indecomposable objects $s, t \in \T{}$ at most one of the spaces $\homp{}(s, t)$ and $\homp{}(t, \tau_d s)$ is non-zero, where $\tau_d$ is the $d$-Auslander-Reiten translation, see \cite[1.4.1]{Iyama}. Then:
\begin{itemize}
\item[(i)] An indecomposable object $t \in \T{}$ is determined up to isomorphism by its class $[t]_\T{} \in K_0(\T{})$.
\item[(ii)] An indecomposable object $x \in \T{}$ is determined up to isomorphism by its composition factors in $\C{}$.
\end{itemize}

There are key technical results that are used in the proofs of Theorems \ref{secondaryResultThm} and \ref{tertiaryResultThm}, most significantly that the index is additive with an error term on short exact sequences; see Theorem \ref{primaryResultThm}. The notations $\delta^*, [-]_\Lambda$, and $\kappa$ will be defined in section 2, see Proposition \ref{isomRmk}.
\end{theorem}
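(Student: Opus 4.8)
The plan is to reduce part (ii) to part (i) and to attack part (i) through an Euler-type bilinear form on $K_0(\T{})$. For (ii): the composition factors of $x\in\C{}$ determine its class $[x]_\C{}\in K_0(\C{})$, since $K_0(\textrm{mod}\,\Phi)$ is free on the classes of the simple modules and $[x]_\C{}$ records exactly their multiplicities. Under the isomorphism $K_0(\C{})\cong K_0(\T{})$ of Theorem \ref{secondaryResultThm}, which by Proposition \ref{isomRmk} is compatible with the inclusion $\T{}\hookrightarrow\C{}$ in the sense that it sends $[x]_\C{}$ to $[x]_\T{}$ for $x\in\T{}$, this determines $[x]_\T{}$, and then part (i) recovers $x$ up to isomorphism. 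So the whole content lies in (i), namely that $t\mapsto[t]_\T{}$ is injective on indecomposables.

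The tool I would use for (i) is the bilinear form on $K_0(\T{})$ given on objects by $\langle s,t\rangle=\dim\homc(s,t)-\dim\textrm{Ext}_\C{}^{d}(s,t)$, where the sign is $-$ precisely because $d$ is odd. First I would check that this descends to $K_0(\T{})$ in each variable: applying $\homc(s,-)$ to a $d$-exact sequence $0\to A_{d+1}\to\cdots\to A_0\to 0$ and using that $\textrm{Ext}_\C{}^{1..d-1}(s,A_j)=0$ for $s,A_j\in\T{}$ collapses the resulting long exact sequence to the relation $\sum_j(-1)^j\langle s,A_j\rangle=0$, the first variable being symmetric. (This is the form-theoretic shadow of the engine behind Theorem \ref{primaryResultThm}, that the index is additive up to an error term.) Next I would invoke $d$-Auslander--Reiten duality $\textrm{Ext}_\C{}^{d}(s,t)\cong D\homc(t,\tau_d s)$ from \cite[1.4.1]{Iyama} to reread the hypothesis: for indecomposable $s,t$ at most one of $\homc(s,t)$ and $\textrm{Ext}_\C{}^{d}(s,t)$ is nonzero, so every value $\langle s,t\rangle$ is either $\ge 0$ (the $\textrm{Hom}$ case) or $\le 0$ (the $\textrm{Ext}^d$ case), with no cancellation between the two terms.

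With this in hand the bookkeeping runs as follows. Taking $s=t$ in the hypothesis and using $\textrm{id}_t\neq 0$ forces $\homc(t,\tau_d t)=0$, hence $\textrm{Ext}_\C{}^{d}(t,t)=0$ and $\langle t,t\rangle=\dim\textrm{End}(t)\ge 1$. Now suppose $[s]_\T{}=[t]_\T{}$ for indecomposables $s,t$. Pairing this equality against $s$ and against $t$, in both variables, and using the no-cancellation observation to discard the $\textrm{Ext}^d$ terms whenever a positive $\dim\textrm{End}$ appears, I would extract $\homc(s,t)\neq 0\neq\homc(t,s)$ together with $\textrm{Ext}_\C{}^{d}(s,t)=\textrm{Ext}_\C{}^{d}(t,s)=0$ and the chain of equalities $\dim\textrm{End}(s)=\dim\textrm{End}(t)=\dim\homc(s,t)=\dim\homc(t,s)$.

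The main obstacle is the final step: upgrading these numerical coincidences to an actual isomorphism $s\cong t$. The form alone cannot achieve this, since $\langle s-t,\,s-t\rangle=0$ is perfectly consistent with $s\not\cong t$; the argument must use the morphism structure. Here I would use that $K$ is algebraically closed, so $\textrm{End}(s)$ and $\textrm{End}(t)$ are local and any non-isomorphism between indecomposables is a radical morphism. If $s\not\cong t$, the mutually nonzero maps $s\to t\to s$ would be a \emph{cycle} of radical morphisms, and the entire purpose of the hypothesis, acting through the $\tau_d$-twist in $d$-Auslander--Reiten duality, is to enforce the directedness that rules such cycles out; the expected mechanism is that a radical cycle forces both $\homc(s,t)$ and $\homc(t,\tau_d s)$ (equivalently $\textrm{Ext}_\C{}^{d}(s,t)$) to be nonzero, contradicting the hypothesis. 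Granting this, some composite $s\to t\to s$ is an isomorphism, so that map $s\to t$ is a split monomorphism, whence $s$ is a summand of the indecomposable $t$ and $s\cong t$. I expect making the ``no radical cycles'' implication precise, rather than the surrounding linear algebra, to be where the real work lies, and it is presumably the heart of Theorem \ref{indexPreTheorem}; the passage to composition factors in Corollary \ref{finalCor} is then formal via Theorem \ref{secondaryResultThm}.
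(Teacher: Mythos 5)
Your reduction of (ii) to (i) is exactly the paper's (Corollary \ref{ballerCor} plus \cite[(I, Theorem 1.7)]{ARS} via the isomorphism $K_0(\C{})\cong K_0(\T{})$), and your bilinear form $\langle s,t\rangle=\dim\homp{}(s,t)+(-1)^d\dim\homp{}(t,\tau_d s)$ together with its descent to $K_0(\T{})$ is precisely Lemma \ref{dependLemma} (the paper verifies the descent via the covariant and contravariant defects of a $d$-exact sequence and the duality $\dim\gamma^*(s)=\dim\gamma_*(\tau_d s)$ of \cite[Theorem~3.8]{JassoKvamme}, which is cleaner than chasing the long exact sequence of $\mathrm{Ext}(s,-)$, where the possible nonvanishing of $\mathrm{Ext}^{>d}$ would need attention). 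Up to that point you are on the paper's track.

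The genuine gap is your endgame. You pair $[s]_\T{}=[t]_\T{}$ only against $s$ and $t$ themselves, extract numerical coincidences, and then need the implication ``a radical cycle $s\to t\to s$ between non-isomorphic indecomposables forces both $\homp{}(s,t)$ and $\homp{}(t,\tau_d s)$ to be nonzero.'' You correctly identify this as the missing step, but it is not a consequence of anything you have set up; it is an unproved directedness assertion, and nothing in Condition H supplies it directly. The paper avoids this entirely by using the form against \emph{every} indecomposable $s$: since $d$ is odd and Condition H says at most one of the two terms in $\langle s,t\rangle$ is nonzero, one recovers $\dim\homp{}(s,t)=\max(\langle s,t\rangle,0)$ from $[t]_\T{}$ for all indecomposable $s$, and then invokes the determination result \cite[Proposition~3.9]{Reid} (an Auslander-type statement that the function $s\mapsto\dim\homp{}(s,t)$ on indecomposables determines $t$ up to isomorphism). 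That citation is the step your argument is missing; with it, the ``no radical cycles'' machinery becomes unnecessary, and without it your proof does not close.
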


\begin{theorem}\label{primaryResultThm}(Theorem \ref{errorLemma})
Let $\delta:0 \to a \to b \to c \to 0$ be an exact sequence in $\C{}$. Then
\begin{align*}
\IndT{}(a)-\IndT{}(b) + \IndT{}(c) = \kappa^{-1}([\delta^*(T)]_\Lambda).
\end{align*}
\end{theorem}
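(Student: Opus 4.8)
The plan is to transport the whole identity into the Grothendieck group $K_0(\textrm{mod}\,\Lambda)$ of the endomorphism algebra $\Lambda = \homc(T,T)$ by means of the functor $F := \homc(T,-)\colon \C \to \textrm{mod}\,\Lambda$, and then to read off the error term from the left-exactness of $F$. First I would record the translation supplied by Proposition \ref{isomRmk}: the isomorphism $\kappa\colon K_0^{\textrm{sp}}(\T) \to K_0(\textrm{mod}\,\Lambda)$ sends $[t]_{\textrm{sp}}$ to $[Ft]_\Lambda = [\homc(T,t)]_\Lambda$. The strategy is then to show that $\kappa$ intertwines the index with the class of the image module, i.e.\ that $\kappa(\IndT(x)) = [Fx]_\Lambda$ for every $x \in \C$; granting this, the theorem reduces to the identity $[Fa]_\Lambda - [Fb]_\Lambda + [Fc]_\Lambda = [\delta^*(T)]_\Lambda$ in $K_0(\textrm{mod}\,\Lambda)$.

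The first key step is the identity $\kappa(\IndT(x)) = [Fx]_\Lambda$. Here I would fix the augmented $\T$-resolution $0 \to t_{d-1} \to \cdots \to t_0 \to x \to 0$ used in Definition \ref{indexDef}. Its defining property is that it remains exact after applying $\homc(t',-)$ for every $t' \in \T$; taking $t' = T$ shows that $0 \to Ft_{d-1} \to \cdots \to Ft_0 \to Fx \to 0$ is exact in $\textrm{mod}\,\Lambda$. Since each $t_i \in \T = \textrm{add}(T)$, every $Ft_i = \homc(T,t_i)$ is a projective $\Lambda$-module, so this is a finite projective resolution of $Fx$. The Euler-characteristic identity in $K_0(\textrm{mod}\,\Lambda)$ then gives $[Fx]_\Lambda = \sum_{i=0}^{d-1}(-1)^i[Ft_i]_\Lambda = \kappa\bigl(\sum_{i=0}^{d-1}(-1)^i[t_i]_{\textrm{sp}}\bigr) = \kappa(\IndT(x))$, as required.

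With this in hand, I would apply the left-exact functor $F$ to $\delta\colon 0 \to a \to b \to c \to 0$. Left-exactness yields exactness of $0 \to Fa \to Fb \to Fc$, and by construction $\delta^*(T)$ is the cokernel of $Fb \to Fc$ (equivalently the image of the connecting map $Fc \to \textrm{Ext}^1_\C(T,a)$), so that $0 \to Fa \to Fb \to Fc \to \delta^*(T) \to 0$ is a four-term exact sequence in $\textrm{mod}\,\Lambda$. Splitting it into two short exact sequences and using additivity of $[-]_\Lambda$ gives $[Fa]_\Lambda - [Fb]_\Lambda + [Fc]_\Lambda = [\delta^*(T)]_\Lambda$. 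Combining with the previous step yields $\kappa(\IndT(a) - \IndT(b) + \IndT(c)) = [\delta^*(T)]_\Lambda$, and applying $\kappa^{-1}$ completes the argument.

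The main obstacle is the first step, namely that $F$ carries the specific resolution chosen in Definition \ref{indexDef} to an honest projective resolution of $Fx$. This rests on the $\homc(\T,-)$-acyclicity built into that resolution — without it, $F$ applied to the resolution need not be exact — and on its finiteness, which both legitimises the Euler-characteristic computation and forces $Fx$ to have finite projective dimension over $\Lambda$. I would also need the well-definedness and bijectivity of $\kappa$ from Proposition \ref{isomRmk}; in particular, that the classes $[Ft_i]_\Lambda$ span $K_0(\textrm{mod}\,\Lambda)$, which is where the finiteness of the global dimension of $\Lambda$ enters.
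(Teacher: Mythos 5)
Your proposal is correct and follows essentially the same route as the paper: the paper first establishes $\kappa(\IndT(x)) = [\homc(T,x)]_\Lambda$ (its Equation (\ref{kappaFormula})) exactly as in your first step, by applying $\homc(T,-)$ to the augmented $\T$-resolution, and then applies $\homc(T,-)$ to $\delta$ to obtain the four-term exact sequence ending in $\delta^*(T)$ and takes the alternating sum in $K_0(\textrm{mod}\:\Lambda)$. No substantive differences.
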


Finally, we generalise another result from Auslander and Reiten to the $d$-abelian case:
\begin{theorem}\label{otherResult}(Theorem \ref{ausReitThm})
Let two objects $s, t \in \T{}$ be given, and let $P_d \to P_{d-1} \to \ldots \to P_0$ be a truncation of the minimal projective resolution of $s$. Then we have that
\begin{align*}
\dim{} \homp{}(s,t) + (-1)^d \dim{} \homp(t, \tau_d s) = \sum_{i=0}^d \dim{} \homp{}(-1)^i(P_i, t).
\end{align*}
\end{theorem}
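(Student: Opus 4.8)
The plan is to prove the formula
\[
\dim \homp(s,t) + (-1)^d \dim \homp(t, \tau_d s) = \sum_{i=0}^d (-1)^i \dim \homp(P_i, t)
\]
by reducing it to a statement about the minimal projective resolution of $s$ together with the defining property of the $d$-Auslander-Reiten translation $\tau_d$. The classical Auslander-Reiten result computes $\dim \homp(s,t) - \dim \homp(t,\tau s)$ (or its stable/injectively-stable variants) as an alternating sum of $\dim \homp(P_i,t)$ over the projective resolution, and the $d$-dimensional statement should be the exact analogue where the AR translate $\tau$ is replaced by $\tau_d$ and the resolution is truncated at length $d$ because $\tau_d$ is built from the $d$-th syzygy rather than the first.

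First I would write down the truncated minimal projective resolution $P_d \to P_{d-1} \to \cdots \to P_0 \to s \to 0$ and apply the functor $\homp(-,t)$ to obtain a complex whose cohomology computes $\mathrm{Ext}^i_\Phi(s,t)$ in degrees $i = 0, \ldots, d$. The key tool is that $\tau_d$ is defined via the composite of the ordinary AR translate with the $(d-1)$-fold syzygy (or cosyzygy) functor, equivalently through the formula $\tau_d = \tau \Omega^{d-1}$ up to the relevant stabilisation; see the reference \cite[1.4.1]{Iyama}. This means the space $\homp(t, \tau_d s)$ is controlled, via the Auslander-Reiten / almost-split formula, by $D\,\overline{\homp}(s,t)$ or $D\,\mathrm{Ext}^d_\Phi(t,s)$-type duality, where $D$ denotes $K$-linear duality. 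The precise identification of which cohomology group at the top of the complex is dual to $\homp(t,\tau_d s)$ is where the sign $(-1)^d$ and the factor involving $\tau_d$ enter.

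The central computation is then an Euler-characteristic argument: the alternating sum $\sum_{i=0}^d (-1)^i \dim \homp(P_i,t)$ equals the alternating sum $\sum_{i=0}^d (-1)^i \dim \mathrm{Ext}^i_\Phi(s,t)$ of the cohomology of the complex $\homp(P_\bullet, t)$, by the standard fact that the Euler characteristic of a bounded complex agrees with that of its cohomology. Because $s, t \in \T$ lie in the $d$-cluster tilting subcategory, the vanishing condition $\mathrm{Ext}^{1..d-1}_\C(\T,\T) = 0$ from Definition \ref{dClusterDefn}(i) kills all the intermediate cohomology groups $\mathrm{Ext}^i_\Phi(s,t)$ for $1 \le i \le d-1$, so the alternating sum collapses to just the two end terms $\dim \mathrm{Hom}_\Phi(s,t) + (-1)^d \dim \mathrm{Ext}^d_\Phi(s,t)$. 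It then remains to identify $\homp(s,t)$ with $\mathrm{Hom}_\Phi(s,t)$ (noting that $\Phi$-homomorphisms are exactly $\homp$, so $\mathrm{Ext}^0 = \homp$) and, crucially, to identify $\dim \mathrm{Ext}^d_\Phi(s,t)$ with $\dim \homp(t,\tau_d s)$.

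The main obstacle is precisely this last identification: establishing the $d$-dimensional Auslander-Reiten duality $\mathrm{Ext}^d_\Phi(s,t) \cong D\,\overline{\homp}(t,\tau_d s)$, or the appropriate version thereof, and handling the difference between $\homp$ and the stable $\overline{\homp}$. In the classical case the duality involves the stable Hom and there are correction terms coming from projective or injective summands; I expect the cleanest route is to invoke Iyama's higher AR duality directly and then argue that, for indecomposable non-projective $s$, the stabilisation does not change the dimension, or to absorb the projective/injective contributions into the statement. The fact that $\mathrm{Ext}^d$ is the right group to use (rather than some lower $\mathrm{Ext}$) is forced by the $d$-cluster tilting vanishing, so the structural reason the theorem holds is that the only surviving homological degrees are $0$ and $d$, and these are the two terms appearing on the left-hand side. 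Once the higher AR duality is in hand, the sign $(-1)^d$ on the left matches the sign $(-1)^d$ of the top cohomology term in the Euler characteristic, and the proof closes.
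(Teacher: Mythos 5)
Your overall strategy (apply $\homp{}(-,t)$ to the truncated resolution, take an Euler characteristic, and identify the surviving end terms) is the right shape and matches the paper's proof in outline, but there is a genuine gap at the step you yourself flag as the ``main obstacle'', and it is not a presentational issue: the identification you propose is the wrong one. The top cohomology of the truncated complex $\homp{}(P_0,t) \to \cdots \to \homp{}(P_d,t)$ is the full cokernel of the last map, not $\textrm{Ext}^d_\Phi(s,t)$; the group $\textrm{Ext}^d_\Phi(s,t)$ is the \emph{kernel} of $\homp{}(P_d,t) \to \homp{}(P_{d+1},t)$ modulo the image from $\homp{}(P_{d-1},t)$, and truncating at $P_d$ discards the kernel condition. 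So your claim that the alternating sum collapses to $\dim\homp{}(s,t) + (-1)^d\dim\textrm{Ext}^d_\Phi(s,t)$ is incorrect; already for $d=1$ the cokernel of $\homp{}(P_0,t)\to\homp{}(P_1,t)$ is $\textrm{Tr}\,s\otimes_\Phi t$, which is strictly larger than $\textrm{Ext}^1_\Phi(s,t)$ in general. Compounding this, the higher Auslander--Reiten duality you then want to invoke identifies $\textrm{Ext}^d_\Phi(s,t)$ with the dual of a \emph{stable} Hom space $\overline{\textrm{Hom}}_\Phi(t,\tau_d s)$, whereas the theorem asserts the formula with the ordinary $\homp{}(t,\tau_d s)$; these differ exactly by the contributions of maps factoring through injectives, and your suggestion to ``argue that the stabilisation does not change the dimension'' will fail for general $s,t \in \T{}$. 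The two errors are two faces of the same discrepancy, and they do not cancel in the way you hope without further argument.

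The paper's proof sidesteps both issues. It applies $(-)^* = \homp{}(-,\Phi)$ to the truncated resolution to present the $d$-th transpose $\textrm{Tr}_d s$, tensors with $t$, and uses the natural isomorphism $P^*\otimes_\Phi t \cong \homp{}(P,t)$ for projective $P$ to identify the resulting complex with $\homp{}(P_\bullet,t)$; the cokernel at the top is then \emph{by construction} $\textrm{Tr}_d s\otimes_\Phi t$, the intermediate cohomologies vanish by the $d$-cluster tilting condition, and the adjunction $D(\textrm{Tr}_d s\otimes_\Phi t)\cong\homp{}(t,D\textrm{Tr}_d s)=\homp{}(t,\tau_d s)$ converts the cokernel term into the ordinary (unstable) Hom space appearing in the statement, with no correction terms needed. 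To repair your argument, replace the top term $\textrm{Ext}^d_\Phi(s,t)$ by the cokernel $\textrm{Tr}_d s\otimes_\Phi t$ and use the tensor--Hom duality rather than Auslander--Reiten duality; as written, the proposal does not close.
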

\endgroup

\section{$K$-theory}
Firstly, we make some remarks about the index.
\begin{remark}
The index defined in Definition \ref{indexDef} is well defined. The reason is that the index does not change when we drop trivial summands of the form
\begin{align*}
\ldots \to 0 \to t = t \to 0 \to \ldots
\end{align*}
from the augmented $\T{}$-resolution in Definition \ref{indexDef}. Doing so permits the reduction of the resolution to a minimal resolution where each differential $t_i \to t_{i-1}$ is in the radical. Hence each augmented $\T{}$-resolution gives the same index as the minimal augmented $\T{}$-resolution, which is unique up to isomorphism.
\end{remark}
We note here that for any $t \in \T{}$, the augmented left resolution is simply the sequence
\begin{align*}
\ldots \to 0 \to t = t \to 0 \to \ldots
\end{align*}
and so we have that $\IndT{}(t) = [t]_{\textrm{sp}}$.\\

We make some observations about $\T{}$. The algebra $\Lambda := \textrm{End}_\C{}(T)$ is known as a \textbf{Higher Auslander Algebra} \cite{IyamaAus}.

\begin{lemma}\label{equivLemma}
Let $\C{}$, $\T{}$ and $\Lambda$ be as above. Then there is an equivalence of categories
\begin{align*}
\T{} \xrightarrow{\textrm{Hom}_\C{}(T, -)} \textrm{proj}\:\Lambda.
\end{align*}
Specifically, the equivalence $\textrm{Hom}_\C{}(T, -)$ maps indecomposable objects in $\T{}$ to indecomposable objects in  $\textrm{proj}\:\Lambda$, and all of the indecomposables in $\textrm{proj}\:\Lambda$ are obtained in this fashion, see \cite[Proposition~2.1]{AuslanderII}.
\end{lemma}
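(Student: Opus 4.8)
The plan is to exhibit $F := \homc{}(T,-)$ as an additive functor that is fully faithful on $\T{}$ and essentially surjective onto $\textrm{proj}\:\Lambda$, since an additive functor with these two properties is automatically an equivalence. First I would verify that $F$ genuinely lands in $\textrm{proj}\:\Lambda$. Here $\homc{}(T,c)$ carries a right $\Lambda$-module structure via precomposition, $f \cdot \lambda = f \circ \lambda$ for $f \in \homc{}(T,c)$ and $\lambda \in \Lambda$. Evaluating at $c = T$ gives $F(T) = \homc{}(T,T) = \Lambda$, the right regular module, which is free of rank one. Since $F$ is additive and $\T{} = \textrm{add}(T)$ consists of direct summands of finite powers $T^n$, we obtain $F(T^n) \cong \Lambda^n$, so $F$ sends every object of $\T{}$ to a direct summand of a finitely generated free module, that is, to an object of $\textrm{proj}\:\Lambda$.

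For full faithfulness I would argue in stages. The base case is the isomorphism $\homc{}(T,T) \cong \textrm{Hom}_\Lambda(\Lambda,\Lambda) \cong \Lambda$, where the map induced by $F$ is precisely the canonical identification sending an endomorphism of $T$ to the corresponding $\Lambda$-linear endomorphism of the regular module; this is the Yoneda-type evaluation isomorphism $\textrm{Hom}_\Lambda(\Lambda, M) \cong M$ applied to $M = \Lambda$. Additivity upgrades this to an isomorphism $\homc{}(T^m, T^n) \cong \textrm{Hom}_\Lambda(\Lambda^m, \Lambda^n)$ for all $m, n$, both sides being $n \times m$ matrices over $\Lambda$ with $F$ acting as the identity on entries. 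Finally, for arbitrary $t, t' \in \T{}$, writing $t = \textrm{im}(e)$ and $t' = \textrm{im}(e')$ for idempotents $e \in \textrm{End}_\C{}(T^m)$ and $e' \in \textrm{End}_\C{}(T^n)$, both Hom-spaces are cut out by the same pair of idempotents on either side, so the isomorphism descends to $\homc{}(t,t') \cong \textrm{Hom}_\Lambda(F(t), F(t'))$.

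For essential surjectivity I would use that every finitely generated projective $P$ is the image of an idempotent $\epsilon \in \textrm{End}_\Lambda(\Lambda^n)$. Because $F$ is already known to be fully faithful, $\epsilon$ lifts uniquely to an idempotent $e \in \textrm{End}_\C{}(T^n)$ with $F(e) = \epsilon$; as $\C{}$ has split idempotents and $\T{}$ is closed under direct summands, $e$ has an image $t \in \T{}$, and $F(t) \cong \textrm{im}(\epsilon) = P$. This establishes that $F$ is an equivalence. The remaining claims about indecomposables are then formal: an equivalence preserves and reflects both indecomposability and direct-sum decompositions, so $F$ restricts to a bijection between the indecomposable objects of $\T{}$ and those of $\textrm{proj}\:\Lambda$.

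I expect the main obstacle to be the careful bookkeeping in the full-faithfulness step, specifically keeping the right $\Lambda$-module conventions consistent throughout and handling the passage from free modules to arbitrary summands via idempotents. The essential-surjectivity step then rests on the splitting of idempotents already available in $\C{}$, which is the one structural input I would need to invoke explicitly.
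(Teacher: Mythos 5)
Your proof is correct and complete: it is the standard projectivization argument, establishing that $\homc{}(T,-)$ is fully faithful and essentially surjective by reducing to the free case $F(T^n)\cong\Lambda^n$ and then passing to summands via split idempotents. The paper does not prove this lemma itself but simply cites Auslander's Proposition 2.1, and your argument is exactly the proof that underlies that reference, so there is nothing to fault.
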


\begin{remark}
Given the equivalence of categories detailed in Lemma \ref{equivLemma}, we have that 
\begin{align*}
K_0^{\textrm{sp}}(\T{}) &\cong K_0^{\textrm{sp}}(\textrm{proj}\:\Lambda) \\
[t]_{\textrm{sp}} &\mapsto [\homc{}(T, t)]_{\textrm{sp}}.
\end{align*}
\end{remark}

There is another important isomorphism of Grothendieck groups:
\begin{lemma}\label{hymanLemma}
Let $\C{}$ and $\T{}$ be as above, and let $\Lambda:= \textrm{End}_\C{}(T)$ be the higher Auslander algebra. The class of $M\in \textrm{mod}\: \Lambda$ in $K_0(\textrm{mod}\:\Lambda)$ is denoted $[M]_\Lambda$. Then there is an isomorphism
\begin{align*}
\rho:K_0^{\textrm{sp}}(\textrm{proj}\:\Lambda) \xrightarrow{~} K_0(\textrm{mod}\:\Lambda)
\end{align*}
where $\rho [P]_{\textrm{sp}} = [P]_\Lambda$.
\end{lemma}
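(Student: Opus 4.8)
The plan is to exhibit $\rho$ as a well-defined group homomorphism and then construct an explicit two-sided inverse. First I would check that $\rho$ is well-defined. By Definition \ref{splitGrothDef}, $K_0^{\textrm{sp}}(\textrm{proj}\:\Lambda)$ is the free abelian group on isomorphism classes of objects of $\textrm{proj}\:\Lambda$ modulo the subgroup generated by $[A \oplus B]_{\textrm{sp}} - [A]_{\textrm{sp}} - [B]_{\textrm{sp}}$. Hence to define $\rho$ it suffices to send each generator $[P]_{\textrm{sp}}$ to $[P]_\Lambda$ and confirm that the defining relations are respected. Since $0 \to A \to A \oplus B \to B \to 0$ is a short exact sequence in $\textrm{mod}\:\Lambda$, the relation defining $K_0(\textrm{mod}\:\Lambda)$ gives $[A \oplus B]_\Lambda = [A]_\Lambda + [B]_\Lambda$, so the assignment descends to the quotient and yields a homomorphism.

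The crucial ingredient for the inverse is that $\Lambda$, being a higher Auslander algebra, has finite global dimension (at most $d+1$); see \cite{Iyama}. Consequently every $M \in \textrm{mod}\:\Lambda$ admits a finite projective resolution $0 \to P_n \to \cdots \to P_0 \to M \to 0$, and I would use this to define the candidate inverse
\[
\sigma: K_0(\textrm{mod}\:\Lambda) \to K_0^{\textrm{sp}}(\textrm{proj}\:\Lambda), \qquad [M]_\Lambda \mapsto \sum_{i=0}^n (-1)^i [P_i]_{\textrm{sp}}.
\]

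The main obstacle is showing that $\sigma$ is well-defined, which requires two verifications. First, the alternating sum must be independent of the chosen projective resolution; this follows from the generalised Schanuel lemma (equivalently the comparison theorem for projective resolutions), since any two finite projective resolutions of $M$ differ only by the addition of trivial summands of the form $P = P$, and such summands cancel in the alternating sum in $K_0^{\textrm{sp}}(\textrm{proj}\:\Lambda)$. Second, $\sigma$ must annihilate the defining relations of $K_0(\textrm{mod}\:\Lambda)$: given a short exact sequence $0 \to A \to B \to C \to 0$, the horseshoe lemma produces a projective resolution of $B$ whose $i$-th term is $P_i^A \oplus P_i^C$, so that $\sigma([B]_\Lambda) = \sigma([A]_\Lambda) + \sigma([C]_\Lambda)$ and the relation $[A]_\Lambda - [B]_\Lambda + [C]_\Lambda$ maps to zero.

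Finally I would verify that $\rho$ and $\sigma$ are mutually inverse. For $P \in \textrm{proj}\:\Lambda$ the complex $0 \to P \to P \to 0$ is already a projective resolution, so $\sigma\rho[P]_{\textrm{sp}} = \sigma[P]_\Lambda = [P]_{\textrm{sp}}$, giving $\sigma\rho = \textrm{id}$. Conversely, splicing a finite projective resolution of $M$ into short exact sequences and applying the relations of $K_0(\textrm{mod}\:\Lambda)$ yields $\sum_{i=0}^n (-1)^i [P_i]_\Lambda = [M]_\Lambda$; therefore $\rho\sigma[M]_\Lambda = \rho\bigl(\sum_{i=0}^n (-1)^i [P_i]_{\textrm{sp}}\bigr) = \sum_{i=0}^n (-1)^i [P_i]_\Lambda = [M]_\Lambda$, so $\rho\sigma = \textrm{id}$. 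This establishes that $\rho$ is an isomorphism with $\rho[P]_{\textrm{sp}} = [P]_\Lambda$ as claimed.
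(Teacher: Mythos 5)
Your proposal is correct and rests on exactly the same key input as the paper: the paper's proof simply observes that the higher Auslander algebra $\Lambda$ has finite global dimension and then cites the resolution theorem of Bass, whereas you have written out the standard proof of that cited theorem (well-definedness of $\rho$, the inverse via finite projective resolutions, Schanuel, horseshoe). So this is the same approach, just with the citation unpacked into a self-contained argument.
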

\begin{proof}
We have by \cite[Section~3]{Iyama} that $\Lambda$ has finite global dimension. We may then apply \cite[Theorem~4.7]{Bass} to prove the result.
\end{proof}
Let an object $c \in \C{}$ be given. By \cite[Theorem~2.2.3]{IyamaAus} there is an augmented left $\T{}$-resolution
\begin{align*}
\ldots \to 0 \to t_{d-1} \to t_{d-2} \to \ldots \to t_0 \to c \to 0,
\end{align*}
which leads to an exact sequence
\begin{align*}
0 \to \homc{}(T, t_{d-1}) \to \ldots \to \textrm{Hom}_\C{}(T, t_0) \to \textrm{Hom}_\C{}(T, c) \to 0
\end{align*}
in $\textrm{mod}\:\Lambda$. We see that this sequence shows that
\begin{align*}
[\homc{}(T, c)]_\Lambda &= \sum_{i=0}^{d-1}(-1)^i[\homc{}(T, t_i)]_\Lambda\\
&= \rho(\sum_{i=0}^{d-1}(-1)^i[\homc{}(T, t_i)]_{\textrm{sp}}) \textrm{ where }\sum_{i=0}^{d-1}(-1)^i[\homc{}(T, t_i)]_{\textrm{sp}} \in K_0^{\textrm{sp}}(\textrm{proj}\:\Lambda) \\
&= \rho \circ \homc{}(T,\sum_{i=0}^{d-1}(-1)^i[t_i]_{\textrm{sp}}) \\
&=\rho \circ \homc{}(T,\IndT{}(c)).
\end{align*}
This shows
\begin{align}\label{kappaFormula}
[\homs{}(T, c)]_\Lambda = \rho \circ \homc{}(T,\IndT{}(c)).
\end{align} This leads to the following proposition:
\begin{proposition}\label{isomRmk}
We have that
\begin{align*}
K_0^{\textrm{sp}}(\T{}) \cong K_0(\textrm{mod}\:\Lambda)
\end{align*}
via the isomorphism $\kappa = \rho \circ \homc{}(T,-)$ from $K_0^{\textrm{sp}}(\T{})$ to $K_0(\textrm{mod}\:\Lambda)$.
\end{proposition}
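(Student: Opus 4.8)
The plan is to exhibit $\kappa$ as a composite of two isomorphisms that have already been established, so that the proposition follows without further computation. The guiding observation is that the very definition $\kappa = \rho \circ \homc{}(T, -)$ factors the map through the split Grothendieck group $K_0^{\textrm{sp}}(\textrm{proj}\:\Lambda)$, and each factor has been handled separately above.

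First I would record that the functor $\homc{}(T, -)\colon \T{} \to \textrm{proj}\:\Lambda$ is an equivalence of additive categories by Lemma \ref{equivLemma}. Any additive equivalence carries direct sums to direct sums and preserves isomorphism classes, so it descends to a well-defined group homomorphism $K_0^{\textrm{sp}}(\T{}) \to K_0^{\textrm{sp}}(\textrm{proj}\:\Lambda)$ sending $[t]_{\textrm{sp}}$ to $[\homc{}(T, t)]_{\textrm{sp}}$; a quasi-inverse functor induces the inverse homomorphism, so this map is itself an isomorphism. This is precisely the isomorphism recorded in the remark immediately following Lemma \ref{equivLemma}, and I would simply cite it.

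Next I would invoke Lemma \ref{hymanLemma}, which supplies the isomorphism $\rho \colon K_0^{\textrm{sp}}(\textrm{proj}\:\Lambda) \to K_0(\textrm{mod}\:\Lambda)$ with $\rho[P]_{\textrm{sp}} = [P]_\Lambda$, resting on the fact that $\Lambda$ has finite global dimension. Composing the two isomorphisms shows at once that $\kappa = \rho \circ \homc{}(T, -)$ is an isomorphism $K_0^{\textrm{sp}}(\T{}) \to K_0(\textrm{mod}\:\Lambda)$, being a composite of isomorphisms. To confirm that this $\kappa$ is the same map appearing in the displayed computation, I would trace a generator through the composite, obtaining $\kappa([t]_{\textrm{sp}}) = \rho([\homc{}(T, t)]_{\textrm{sp}}) = [\homc{}(T, t)]_\Lambda$, in agreement with the identity \eqref{kappaFormula} derived just before the statement.

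The main obstacle, such as it is, lies not in this proposition but in the two results it rests on: one must be confident that the additive equivalence genuinely induces a group isomorphism on split Grothendieck groups, respecting both the direct-sum relations and well-definedness on isomorphism classes, and that $\rho$ is correctly identified via finite global dimension and Bass's theorem. Given Lemmas \ref{equivLemma} and \ref{hymanLemma}, however, the proposition is a purely formal consequence of composing the two isomorphisms, and I would present it as such.
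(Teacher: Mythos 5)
Your proposal is correct and follows exactly the paper's own argument: the paper proves this proposition by combining Lemma \ref{equivLemma} (via the remark giving $K_0^{\textrm{sp}}(\T{}) \cong K_0^{\textrm{sp}}(\textrm{proj}\:\Lambda)$) with Lemma \ref{hymanLemma}, so that $\kappa$ is a composite of two established isomorphisms. You simply spell out the verification on generators in more detail than the paper does.
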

\begin{proof}
We obtain this result by combining Lemma \ref{equivLemma} and Lemma \ref{hymanLemma}.
\end{proof}
We now need a tool known as the defect in order to continue our analysis of the index. This is defined as follows.
\begin{definition}\label{defectDef}
Let $\delta$: $0 \to t_{d+1} \to \ldots \to t_0 \to 0$ be a $d$-exact sequence in the $d$-abelian category $\T{}$. Then the \textbf{covariant defect} $\delta_*$ and the \textbf{contravariant defect} $\delta^*$ are defined by the exactness of the following sequences:
\begin{align*}
&0 \to \textrm{Hom}_\T{}(t_0, -) \to \ldots \to \textrm{Hom}_\T{}(t_{d+1}, -) \to \delta_*(-) \to 0 \\
&0 \to \textrm{Hom}_\Phi(-,t_{d+1}) \to \ldots \to \textrm{Hom}_\Phi(-,t_0) \to \delta^*(-) \to 0.
\end{align*}
\end{definition}

\begin{remark}
Letting $d=1$ in Definition \ref{defectDef}, we obtain the traditional defect of \cite[Section IV.4]{ARS}, where $\delta$: $0 \to A \to B \to C \to 0$ is a short exact sequence in an abelian category.
\end{remark}
We finish this section with a final result. This shows that the index is additive on short exact sequences up to an error term. This is a known result for triangulated categories, as shown by \cite[Proposition~2.2]{Palu}.
\begin{theorem}\label{errorLemma}(= Theorem \ref{primaryResultThm})
Let $\delta:0 \to a \to b \to c \to 0$ be an exact sequence in $\C{}$. Then
\begin{align*}
[\delta^*(T)]_\Lambda = \kappa(\IndT{}(a)-\IndT{}(b) + \IndT{}(c)).
\end{align*}
\end{theorem}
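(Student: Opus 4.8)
The plan is to transport both sides of the claimed identity into $K_0(\textrm{mod}\:\Lambda)$ and to compare them there, using the fact established in \eqref{kappaFormula} and Proposition \ref{isomRmk} that the isomorphism $\kappa = \rho \circ \homc{}(T,-)$ satisfies $\kappa(\IndT{}(x)) = [\homc{}(T,x)]_\Lambda$ for every $x \in \C{}$. Since $\kappa$ is a group homomorphism, the right-hand side expands as
\begin{align*}
\kappa(\IndT{}(a)-\IndT{}(b)+\IndT{}(c)) = [\homc{}(T,a)]_\Lambda - [\homc{}(T,b)]_\Lambda + [\homc{}(T,c)]_\Lambda,
\end{align*}
so it suffices to identify this alternating sum with $[\delta^*(T)]_\Lambda$ inside $K_0(\textrm{mod}\:\Lambda)$.

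To do this I would apply the left exact functor $\homc{}(T,-)$ to the short exact sequence $\delta$, obtaining the exact sequence $0 \to \homc{}(T,a) \to \homc{}(T,b) \to \homc{}(T,c)$ of right $\Lambda$-modules. Since $\C{} = \textrm{mod}\:\Phi$ we have $\homc{}(T,-) = \homp{}(T,-)$, so under the $d=1$ specialisation of Definition \ref{defectDef} (the classical contravariant defect of a short exact sequence) the module $\delta^*(T)$ is exactly the cokernel of the final map $\homc{}(T,b) \to \homc{}(T,c)$. This promotes the above to the four-term exact sequence
\begin{align*}
0 \to \homc{}(T,a) \to \homc{}(T,b) \to \homc{}(T,c) \to \delta^*(T) \to 0
\end{align*}
in $\textrm{mod}\:\Lambda$; here one checks that every term genuinely lies in $\textrm{mod}\:\Lambda$, each $\homc{}(T,x)$ being a finitely generated right $\Lambda$-module and $\delta^*(T)$ a cokernel of $\Lambda$-module maps.

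Finally I would split this four-term sequence at the image $I = \textrm{im}(\homc{}(T,b) \to \homc{}(T,c))$ into two short exact sequences and read off in $K_0(\textrm{mod}\:\Lambda)$ the relation $[\delta^*(T)]_\Lambda = [\homc{}(T,a)]_\Lambda - [\homc{}(T,b)]_\Lambda + [\homc{}(T,c)]_\Lambda$. Comparing with the expansion of $\kappa(\IndT{}(a)-\IndT{}(b)+\IndT{}(c))$ from the first step then gives the theorem. I do not anticipate a deep obstacle, since all of the structural work has been carried out in \eqref{kappaFormula} and Proposition \ref{isomRmk}; the one point that genuinely requires care is the matching of conventions, namely confirming that the defining sequence of $\delta^*$ in Definition \ref{defectDef}, specialised to $d=1$ and evaluated at $T$, is precisely the cokernel sequence coming from $\homc{}(T,-)$. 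This is what ensures the error term is $[\delta^*(T)]_\Lambda$ rather than some higher $\textrm{Ext}$ contribution that would appear were one to continue the long exact sequence further.
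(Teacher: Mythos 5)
Your proposal is correct and follows essentially the same route as the paper: apply $\homc{}(T,-)$ to $\delta$ to obtain the four-term exact sequence $0 \to \homc{}(T,a) \to \homc{}(T,b) \to \homc{}(T,c) \to \delta^*(T) \to 0$ in $\textrm{mod}\:\Lambda$, read off the alternating-sum relation in $K_0(\textrm{mod}\:\Lambda)$, and convert each term via Equation (\ref{kappaFormula}). The only difference is that you make explicit two points the paper leaves implicit, namely splitting the four-term sequence at the image and checking that $\delta^*(T)$ is precisely the cokernel of $\homc{}(T,b)\to\homc{}(T,c)$, both of which are correct and harmless additions.
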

\begin{proof}
Let an exact sequence
\begin{align*}
\delta: 0 \to a \to b \to c \to 0
\end{align*}
be given in $\C{}$. This will give us an exact sequence
\begin{align*}
0 \to \homc{}(T, a) \to \homc{}(T, b) \to \homc{}(T, c) \to \delta^*(T) \to 0
\end{align*}
in $\textrm{mod}\:\Lambda$. This shows that 
\begin{align*}
[\delta^*(T)]_\Lambda &= [\homc{}(T, a)]_\Lambda - [\homc{}(T, b)]_\Lambda + [\homc{}(T, c)]_\Lambda \\
&=\kappa( \IndT{}(a) - \IndT{}(b) + \IndT{}(c))
\end{align*}
where the second equality is by Equation (\ref{kappaFormula}).
\end{proof}
Thus, we may determine the alternating sum of the indices of a short exact sequence in $\C{}$ by understanding the element $[\delta^*(T)]_\Lambda$ in $K_0(\textrm{mod}\:\Lambda)$. \\
\section{Morphisms}
Recalling that $\T{}$ is a $d$-cluster tilting subcategory of $\C{}$, we have that $\T{}$ is also a $d$-abelian category by \cite[Theorem~3.16]{Jasso}, and as such it has a Grothendieck group. We would like to understand the relationship between the Grothendieck groups of $\C{}$ and $\T{}$. \\

Note first that there are the canonical projections
\begin{align*}
\pi_{\C{}}: K_0^{\textrm{sp}}(\C{}) &\to K_0(\C{}) \\
\pi_{\T{}}: K_0^{\textrm{sp}}(\T{}) &\to K_0(\T{}).
\end{align*}
There is the inclusion map
\begin{align*}
\iota:K_0^{\textrm{sp}}(\T{}) &\to K_0^{\textrm{sp}}(\C{}) \\
 [t]_{\textrm{sp}} &\mapsto [t]_{\textrm{sp}}. 
\end{align*}
We may also extend Definition \ref{indexDef} to define the map
\begin{align*}
\IndT{}:K_0^{\textrm{sp}}(\C{}) &\to K_0^{\textrm{sp}}(\T{}) \\
[c]_{\textrm{sp}} &\mapsto \IndT{}(c).
\end{align*}
This gives the following diagram:
\begin{center}
\begin{tikzpicture}[line cap = round, line join = round]
\node (a) at (-2, 0) {$K_0^\textrm{sp}(\mathscr{C})$};
\node (b) at (2, 0) {$K_0^\textrm{sp}(\mathscr{T})$};

\node (c) at (-2, 2) {Ker$(\pi_{\mathscr{C}})$};
\node (d) at (2, 2) {Ker$(\pi_{\mathscr{T}})$};

\node (e) at (-2, -2) {$K_0(\mathscr{C})$};
\node (f) at (2, -2) {$K_0(\mathscr{T})$.};

\draw [->] ([yshift= 0.2 cm]a.east) to node[above]{$\IndT$} ([yshift= 0.2 cm]b.west);
\draw [->] ([yshift= -0.2 cm]b.west) to node[below]{$\iota$} ([yshift= -0.2 cm]a.east);

\draw [->] (c) to node[right]{$\kappa_{\mathscr{C}}$} (a);
\draw [->] (d) to node[right]{$\kappa_{\mathscr{T}}$} (b);
\draw [->] (a) to node[right]{$\pi_{\mathscr{C}}$} (e);
\draw [->] (b) to node[right]{$\pi_{\mathscr{T}}$} (f);
\end{tikzpicture}
\end{center}
This raises the question whether $\IndT{}$ and $\iota$ induce morphisms between $K_0(\C{})$ and $K_0(\T{})$. We will examine both. If such maps do exist, we will refer to them as  $f$ and $g$ respectively. \\

We will show in Lemma \ref{inductionLemma} that $\iota$ induces a map $g: K_0(\T{}) \to K_0(\C{})$.

\begin{lemma}\cite[Proposition~4.1]{Bass}\label{longExLem}
Let the sequence
\begin{align*}
0 \to c_1 \to c_2 \to \ldots \to c_n \to 0
\end{align*}
be exact in $\C{}$. Then in $K_0(\C{})$, we have that 
\begin{align*}
\sum_{i=1}^n (-1)^i[c_i]_\C{} = 0.
\end{align*}
\end{lemma}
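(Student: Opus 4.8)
The plan is to prove the statement by induction on the length $n$ of the exact sequence, reducing the general case to the defining relation of the Grothendieck group $K_0(\C{})$, which is stated only for short exact sequences $0 \to A \to B \to C \to 0$. Note first that the $n=3$ case is immediate: an exact sequence $0 \to c_1 \to c_2 \to c_3 \to 0$ gives exactly the relation $[c_1]_\C{} - [c_2]_\C{} + [c_3]_\C{} = 0$ in $K_0(\C{})$ by Definition \ref{splitGrothDef}, and the alternating sum $\sum_{i=1}^3 (-1)^i [c_i]_\C{}$ agrees with this up to an overall sign. The cases $n=1$ and $n=2$ are degenerate (forcing the objects to be zero) and are checked directly.

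For the inductive step I would take a long exact sequence of length $n$ and split it at an interior term. Concretely, given
\begin{align*}
0 \to c_1 \to c_2 \to \ldots \to c_n \to 0,
\end{align*}
let $\varphi: c_{n-1} \to c_n$ be the final nonzero map and set $k = \ker(c_{n-2} \to c_{n-1}) = \mathrm{im}(c_{n-3} \to c_{n-2})$, or more cleanly, let $k$ be the image of the map $c_{n-2} \to c_{n-1}$. Then exactness yields a short exact sequence
\begin{align*}
0 \to k \to c_{n-1} \to c_n \to 0
\end{align*}
together with a shorter exact sequence
\begin{align*}
0 \to c_1 \to \ldots \to c_{n-2} \to k \to 0
\end{align*}
of length $n-1$. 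The first sequence contributes the relation $[k]_\C{} - [c_{n-1}]_\C{} + [c_n]_\C{} = 0$ in $K_0(\C{})$, and the second, by the induction hypothesis, gives $\sum_{i=1}^{n-2}(-1)^i [c_i]_\C{} + (-1)^{n-1}[k]_\C{} = 0$. Adding these with appropriate signs and cancelling the $[k]_\C{}$ terms produces the desired identity $\sum_{i=1}^n (-1)^i [c_i]_\C{} = 0$.

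The main bookkeeping obstacle is tracking signs correctly: the term $[k]_\C{}$ enters the two auxiliary relations with opposite parities, and one must verify that the alternating signs align so that $[k]_\C{}$ cancels rather than doubles. I would handle this by being careful that $k$ sits in position $n-1$ of the truncated sequence, so it carries sign $(-1)^{n-1}$ there, which is the sign opposite to what it carries as the kernel term in the short exact sequence $0 \to k \to c_{n-1} \to c_n \to 0$ once that relation is itself multiplied through by $(-1)^{n-1}$. Beyond this the argument is routine, as the essential content is simply that every finite exact sequence can be spliced into short exact sequences, each of which imposes a relation in $K_0(\C{})$ by definition. Since this lemma is cited as \cite[Proposition~4.1]{Bass}, I would also remark that the result holds in any abelian category and the proof does not use the specific structure of $\C{} = \mathrm{mod}\,\Phi$.
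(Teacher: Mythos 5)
Your argument is correct and is exactly the standard d\'evissage the paper leaves out: the paper's own ``proof'' is omitted for brevity with a citation to \cite[Proposition~4.1]{Bass}, and the splicing induction you give (split off $0 \to k \to c_{n-1} \to c_n \to 0$ with $k = \mathrm{im}(c_{n-2} \to c_{n-1}) = \ker(c_{n-1} \to c_n)$, apply the inductive hypothesis to $0 \to c_1 \to \cdots \to c_{n-2} \to k \to 0$, and cancel $[k]_\C{}$) is precisely how that result is proved; your sign bookkeeping checks out. The only blemish is the first of your two proposed definitions of $k$, namely $k = \ker(c_{n-2} \to c_{n-1})$, which is the wrong object and would not fit into the short exact sequence you need --- but your ``more cleanly'' alternative is the correct one and is what your argument actually uses, so this is a notational slip rather than a gap.
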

\begin{proof}
Omitted for brevity.
\end{proof}

\begin{lemma}\label{exactLemma}
An $d$-exact sequence in $\T{}$ is exact in $\C{}$.
\end{lemma}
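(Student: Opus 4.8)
The plan is to test exactness of a $d$-exact sequence against the regular module $\Phi$, exploiting that $\homc{}(\Phi,-)$ is naturally isomorphic to the identity functor on $\C{}=\textrm{mod}\,\Phi$. Concretely, I would write a $d$-exact sequence in $\T{}$ as
\[
\delta: 0 \to t_{d+1} \xrightarrow{f_{d+1}} t_d \to \cdots \to t_1 \xrightarrow{f_1} t_0 \to 0,
\]
and recall from \cite{Jasso} that being $d$-exact means $(f_{d+1},\ldots,f_2)$ is a $d$-kernel of $f_1$ and $(f_d,\ldots,f_1)$ is a $d$-cokernel of $f_{d+1}$. Since a sequence of $\Phi$-modules is exact precisely when it is exact after applying $\homc{}(\Phi,-)$, it suffices to show that $\homc{}(\Phi,\delta)$ is an exact sequence of $K$-vector spaces. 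The first observation I would record is that $\Phi$ (and dually $D\Phi$) lies in $\T{}$: a projective module $P$ satisfies $\textrm{Ext}^{1..d-1}_\C{}(P,\T{})=0$, so $P\in\T{}$ by Definition \ref{dClusterDefn}, and dually every injective module lies in $\T{}$.

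With $\Phi\in\T{}$ available as a test object, the $d$-kernel property of $\delta$ applied to $B=\Phi$ yields that
\[
0 \to \homc{}(\Phi,t_{d+1}) \to \homc{}(\Phi,t_d) \to \cdots \to \homc{}(\Phi,t_1) \to \homc{}(\Phi,t_0)
\]
is exact at every term except possibly the last, that is, exact everywhere apart from surjectivity of the final map. The only remaining point is therefore to establish this surjectivity, equivalently that $f_1\colon t_1\to t_0$ is an epimorphism in $\C{}$, after which the identification $\homc{}(\Phi,-)\cong\textrm{id}$ turns the displayed exactness into exactness of $\delta$ in $\C{}$.

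The technical core, and the step I expect to be the main obstacle, is upgrading the categorical mono/epi conditions that hold in $\T{}$ to genuine injectivity and surjectivity in $\C{}$. For the epimorphism $f_1$, I would argue by contradiction: if $C=\textrm{coker}_\C{}(f_1)\neq 0$, embed it in an injective envelope $C\hookrightarrow J$; since injectives lie in $\T{}$ we have $J\in\T{}$, and the induced nonzero map $t_0\to J$ composes with $f_1$ to zero, contradicting that $f_1$ is an epimorphism in the $d$-abelian category $\T{}$. The dual argument with projective covers (using $\textrm{proj}\,\Phi\subseteq\T{}$) shows $f_{d+1}$ is a monomorphism in $\C{}$, which also reconciles with the exactness at $t_{d+1}$ coming from the $d$-kernel property. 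Combining the $d$-kernel exactness of the middle terms with this surjectivity at $t_0$ completes the proof; I would note that one could instead run the entire argument contravariantly against the injective cogenerator $D\Phi$ using the $d$-cokernel property, but testing against $\Phi$ is cleaner because $\homc{}(\Phi,-)$ is literally the identity.
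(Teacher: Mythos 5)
Your proof is correct, but it takes a genuinely different route from the paper. The paper's argument forms the cokernel $k$ of the first map in $\C{}$, uses Jasso's construction (\cite[3.17]{Jasso} and the proof of \cite[Theorem~3.16]{Jasso}) to extend $0 \to k \to t_2' \to \cdots$ to a second $d$-exact sequence beginning with the same morphism $\alpha$ that is known to be exact in $\C{}$, and then invokes the homotopy-uniqueness of $d$-exact sequences (\cite[2.8]{Jasso}) to transfer exactness back to the original sequence. You instead verify exactness directly from the defining Hom-exactness conditions: since $\Phi$ is projective it satisfies $\textrm{Ext}^{1..d-1}_\C{}(\Phi,\T{})=0$ and hence lies in $\T{}$, so the $d$-kernel condition tested at $B=\Phi$, combined with $\homc{}(\Phi,-)\cong\textrm{id}$, gives exactness at every term except the last; the remaining surjectivity of $f_1$ follows from your injective-envelope argument, which is valid because injectives also lie in $\T{}$ and the $d$-cokernel condition makes $f_1$ an epimorphism in $\T{}$, so a nonzero map $t_0\to J$ killing $f_1$ is impossible. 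Both arguments are sound. Yours is more elementary and self-contained --- it uses only Definition \ref{dClusterDefn} and the definition of a $d$-exact sequence, avoiding the comparison/homotopy machinery --- whereas the paper's proof is shorter on the page but leans on heavier imported results whose hypotheses (in particular, exactly what \cite[2.8]{Jasso} guarantees about two $d$-exact sequences sharing their initial morphism) the reader must check. The one cosmetic remark is that your dual argument via projective covers for injectivity of $f_{d+1}$ is redundant, as you yourself note, since that already falls out of the $d$-kernel condition at $B=\Phi$.
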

\begin{proof}
Let a $d$-exact sequence
\begin{align*}
\epsilon: t_{0} \xrightarrow{\alpha} t_1 \to \ldots \to t_{d+1}
\end{align*}
in $\T{}$ be given. Then we have a right exact sequence
\begin{align*}
t_{0} \xrightarrow{\alpha} t_1 \to k \to 0
\end{align*}
where $k$ is the cokernel of $\alpha$ in $\C{}$. By \cite[3.17]{Jasso}, we may construct an exact sequence
\begin{align*}
0 \to k \to t'_2 \to \ldots \to t'_{d+1}
\end{align*}
where each $t_i' \in \T{}$. By \cite[Proof of Theorem 3.16]{Jasso} this gives a $d$-exact sequence
\begin{align*}
t_{0} \xrightarrow{\alpha} t_1 \to t'_2 \to \ldots \to t'_{d+1}
\end{align*}
which is also exact in $\C{}$. By \cite[2.8]{Jasso} we have that $d$-exact sequences are determined up to homotopy by a single morphism, and as both sequences start with $\alpha$ they have the same homotopy. Then the sequence $\epsilon$ is exact, as required.
\end{proof}

\begin{lemma}\label{inductionLemma}
The map $\iota: K_0^{\textrm{sp}}(\T{}) \to K_0^{\textrm{sp}}(\C{})$ induces a well defined map $g: K_0(\T{}) \to K_0(\C{})$.
\end{lemma}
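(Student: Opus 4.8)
The plan is to use the universal property of the quotient map $\pi_{\T{}}: K_0^{\textrm{sp}}(\T{}) \to K_0(\T{})$. Since the sought-after map $g$ must satisfy $g \circ \pi_{\T{}} = \pi_{\C{}} \circ \iota$, it suffices to show that the composite $\pi_{\C{}} \circ \iota: K_0^{\textrm{sp}}(\T{}) \to K_0(\C{})$ annihilates the kernel of $\pi_{\T{}}$. By Definition \ref{grothDef}, that kernel is generated by the elements $\sum_{i=0}^{d+1} (-1)^i [A_i]_{\textrm{sp}}$ arising from $d$-exact sequences $0 \to A_{d+1} \to \ldots \to A_0 \to 0$ in $\T{}$, so I only need to verify that the composite sends each such generator to zero.

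First I would fix an arbitrary $d$-exact sequence $\delta: 0 \to A_{d+1} \to \ldots \to A_0 \to 0$ in $\T{}$. Applying $\iota$ termwise and then $\pi_{\C{}}$, the image of the corresponding generator is $\sum_{i=0}^{d+1} (-1)^i [A_i]_{\C{}}$ in $K_0(\C{})$. The key step is then to invoke Lemma \ref{exactLemma}, which tells us that $\delta$, being $d$-exact in $\T{}$, is genuinely an exact sequence in $\C{}$. With $\delta$ recognised as an honest exact sequence, I would apply Lemma \ref{longExLem}: the alternating sum of the classes of the terms of any exact sequence in $\C{}$ vanishes in $K_0(\C{})$. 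After reconciling the indexing conventions — re-indexing $\delta$ as $0 \to c_1 \to \ldots \to c_{d+2} \to 0$, which reverses the order and so contributes only an overall factor of $(-1)^d$ — this yields $\sum_{i=0}^{d+1} (-1)^i [A_i]_{\C{}} = 0$. Hence every generator of $\ker(\pi_{\T{}})$ is killed, the composite factors through $\pi_{\T{}}$, and the universal property produces the unique well-defined map $g$.

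I expect the only genuine subtlety to be bookkeeping: checking that the alternating signs in the $d$-exact relation of Definition \ref{grothDef} really do coincide, up to a harmless global sign, with those in the conclusion of Lemma \ref{longExLem} after the reversal of order. All of the substantive content — that $d$-exactness in $\T{}$ forces exactness in $\C{}$, and that long exact sequences vanish in $K_0(\C{})$ — has already been established in Lemmas \ref{exactLemma} and \ref{longExLem}, so the present statement is essentially an assembly of these two facts via the quotient argument.
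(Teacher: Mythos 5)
Your proposal is correct and follows essentially the same route as the paper's own proof: both reduce the claim to showing that $\pi_{\C{}} \circ \iota$ kills the generators of $\ker(\pi_{\T{}})$ coming from $d$-exact sequences, and both conclude by combining Lemma \ref{exactLemma} (a $d$-exact sequence in $\T{}$ is exact in $\C{}$) with Lemma \ref{longExLem} (alternating sums over exact sequences vanish in $K_0(\C{})$). Your additional remark about reconciling the indexing conventions is a harmless bit of extra care that the paper glosses over.
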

\begin{proof}
To show this, we need to show $\pi_\C{} \circ \iota \circ \kappa_\T{} = 0$; that is, any $d$-exact sequence in $\T{}$ maps to zero via the map $\pi_\C{}$. By Lemma \ref{exactLemma}, any $d$-exact sequence is exact in $\C{}$, and by Lemma \ref{longExLem}, this means that it will map to zero via the map $\pi_\C{}$.
\end{proof}
Now we examine the index. As above, $\IndT{}$ induces a map $f:K_0(\C{}) \to K_0(\T{})$ if and only if the composition $\pi_\T{} \circ \IndT{} \circ \kappa_\C{} = 0$. We again let $\Lambda$ be the higher Auslander algebra. \\

Checking that the composition $\pi_\T{} \circ \IndT{} \circ \kappa_\C{} = 0$ amounts to checking that for an exact sequence $0 \to a \to b \to c \to 0$ in $\C{}$, the alternating sum $\IndT{}(a) - \IndT{}(b) +\IndT{}(c)$ is sent to zero by $\pi_\T{}$. Motivated by Proposition \ref{isomRmk} and Theorem \ref{errorLemma}, we start by investigating $[\delta^*(T)]_\Lambda$.\\

We know by \cite[(I, Theorem 1.7)]{ARS} that for any $m \in \textrm{mod}\:\Lambda
$ there is an equality $[m]_\Lambda = [s_1]_\Lambda + \ldots + [s_n]_\Lambda$ in $K_0(\textrm{mod}\:\Lambda)$ where $s_1, \ldots, s_n$ are the composition factors of $m$. We know that every simple module $S$ can be realised as the simple top of a projective module $P$, and we recall that all projective modules have the form $P = \homc{}(T, t)$ for some $t \in \T{}$. We wish to investigate which objects $t \in \T{}$ give rise to the composition factors of $\delta^*(T)$.
\begin{lemma}\label{compFactorLemma}
Let $\delta:0 \to a \to b \to c \to 0$ be an exact sequence in $\C{}$ with the contravariant defect $\delta^*$. Then none of the composition factors of $\delta^*(T)$ arise as the simple top of a projective module $\homc{}(T, t)$ where $t$ is projective in $\T{}$.
\end{lemma}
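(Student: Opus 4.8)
The plan is to translate the statement about the composition factors of the $\Lambda$-module $\delta^*(T)$ into a statement about the contravariant defect $\delta^*$ evaluated at $t$, and then to show that this defect vanishes whenever $t$ is projective in $\T{}$. Write $P_t = \homc{}(T,t)$ for the indecomposable projective $\Lambda$-module attached to an indecomposable $t \in \T{}$ via Lemma \ref{equivLemma}, and let $S_t = \textrm{top}(P_t)$. Since $K$ is algebraically closed, $S_t$ is a composition factor of a module $M \in \textrm{mod}\,\Lambda$ if and only if $\textrm{Hom}_\Lambda(P_t, M) \neq 0$, its dimension being the multiplicity of $S_t$ in $M$. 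Thus it suffices to prove that $\textrm{Hom}_\Lambda(P_t, \delta^*(T)) = 0$ for every $t$ projective in $\T{}$.

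First I would compute $\textrm{Hom}_\Lambda(P_t, \delta^*(T))$. The sequence $\homc{}(T,b) \to \homc{}(T,c) \to \delta^*(T) \to 0$ from the proof of Theorem \ref{errorLemma} is a projective presentation of $\delta^*(T)$; applying the exact functor $\textrm{Hom}_\Lambda(P_t, -)$ and using the natural identification $\textrm{Hom}_\Lambda(\homc{}(T,t), \homc{}(T,x)) \cong \homc{}(t,x)$ for $t \in \T{}$ and arbitrary $x \in \C{}$ (which reduces to the tautology $\textrm{Hom}_\Lambda(\Lambda, \homc{}(T,x)) \cong \homc{}(T,x)$ when $t = T$, and then passes to summands of powers of $T$), I obtain
\begin{align*}
\textrm{Hom}_\Lambda(P_t, \delta^*(T)) \cong \textrm{coker}\big(\homc{}(t,b) \to \homc{}(t,c)\big) = \delta^*(t),
\end{align*}
the last equality being Definition \ref{defectDef}. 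Hence the lemma reduces to showing $\delta^*(t) = 0$, i.e. that every morphism $t \to c$ lifts through $b \to c$.

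The heart of the argument is then to show that an object projective in the $d$-abelian category $\T{}$ is already projective in $\C{}$, for then $\homc{}(t,-)$ is exact and $\delta^*(t)=0$ for every short exact sequence $\delta$. To see this I would fix a projective cover $\epsilon \colon P \to t$ in $\C{}$ with kernel $\Omega t$, noting $P \in \textrm{proj}\,\Phi \subseteq \T{}$. Taking the augmented left $\T{}$-resolution $0 \to s_{d-1} \to \ldots \to s_0 \to \Omega t \to 0$ of Definition \ref{indexDef} and splicing it with $0 \to \Omega t \to P \xrightarrow{\epsilon} t \to 0$ produces an exact sequence
\begin{align*}
0 \to s_{d-1} \to \ldots \to s_0 \to P \xrightarrow{\epsilon} t \to 0
\end{align*}
in $\C{}$ whose $d+2$ terms all lie in $\T{}$; by \cite[Theorem~3.16]{Jasso} this is a $d$-exact sequence in $\T{}$. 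Since $t$ is projective, $\textrm{Hom}_\T{}(t,-)$ sends it to an exact sequence of abelian groups, and surjectivity at the final term says exactly that $\epsilon_* \colon \homc{}(t,P) \to \homc{}(t,t)$ is onto. Lifting $\textrm{id}_t$ shows $\epsilon$ is split, so $t$ is a direct summand of the projective $P$, hence projective in $\C{}$. Combined with the reduction above this gives $\delta^*(t)=0$, so $S_t$ is not a composition factor of $\delta^*(T)$.

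I expect the main obstacle to be the construction and justification of the spliced $d$-exact sequence: verifying that an exact sequence of length $d+2$ in $\C{}$ with all terms in $\T{}$ is genuinely $d$-exact in $\T{}$ (the converse of Lemma \ref{exactLemma}), and matching indices so that projectivity of $t$ yields surjectivity precisely at the $t$-spot. By contrast, the homological identification in the first two paragraphs is routine once the natural isomorphism $\textrm{Hom}_\Lambda(\homc{}(T,t),\homc{}(T,x)) \cong \homc{}(t,x)$ is established.
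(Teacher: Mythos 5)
Your proof is correct and follows the same overall strategy as the paper's: reduce the statement to showing that the contravariant defect $\delta^*(t)$ vanishes when $t$ is projective. The execution differs in two places. For the reduction, the paper passes through the equivalence $\textrm{Fun}_K(\T{}, \textrm{mod}\:K) \simeq \textrm{Mod}\:\Lambda$ and identifies the simple top $S_t$ with a simple functor $\sigma_t$ supported at $t$, concluding that $S_t$ is a composition factor of $\delta^*(T)$ if and only if $\delta^*(t) \neq 0$; you reach the same equivalence more directly by applying the exact functor $\textrm{Hom}_\Lambda(P_t,-)$ to the presentation $\homc{}(T,b) \to \homc{}(T,c) \to \delta^*(T) \to 0$ and using $\textrm{Hom}_\Lambda(\homc{}(T,t),\homc{}(T,x)) \cong \homc{}(t,x)$ — these are really the same mechanism, since $\textrm{Hom}_\Lambda(P_t,-)$ corresponds to evaluation at $t$ under that equivalence. (Minor quibble: the sequence you use is not a projective presentation, as $\homc{}(T,c)$ need not be projective for $c \notin \T{}$, but your argument only uses that $\textrm{Hom}_\Lambda(P_t,-)$ preserves cokernels, so nothing is lost.) For the vanishing step, the paper simply invokes the fact that the defect of a short exact sequence vanishes on projective modules, implicitly reading ``projective in $\T{}$'' as ``projective $\Phi$-module''; you instead prove that a projective object of the $d$-abelian category $\T{}$ is projective in $\C{}$, by splicing the projective cover with a $\T{}$-resolution of the syzygy to get a $d$-exact sequence and splitting it. This extra argument is sound (the spliced sequence is exact in $\C{}$ with all $d+2$ terms in $\T{}$, hence $d$-exact by the construction in Jasso's Theorem 3.16, and lifting $\textrm{id}_t$ splits the cover), and it closes a small gap the paper leaves implicit; it is the only genuinely new content relative to the published proof.
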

\begin{proof}
Let $\Lambda$ be as above, and let $\textrm{Fun}_K(\T{}, \textrm{mod}\:K)$ be the category of $K$-linear functors on $\T{}$. Then by \cite[Section~4]{AuslanderII} there is an equivalence of categories
\begin{align*}
\textrm{Fun}_K(\T{}, \textrm{mod}\:K) &\to \textrm{Mod}\:\Lambda \\
A(-) &\mapsto A(T).
\end{align*}

Then for an indecomposable $t \in \T{}$, we have an indecomposable projective $\homc{}(T, t)$ in $\textrm{mod}\:\Lambda$ which has the simple top (achieved in the manner described above) $S_t$. By the equivalence of categories described above and \cite[Proposition~2.3]{AuslanderII}, the simple top has an equivalent functor, $\sigma_t$. By the equivalence, this functor has the property that for $t' \in \T{}$, either $\sigma_t(t') = K$ when $t' \cong t$ or $\sigma_t(t')=0$ when $t$ and $t'$ are not isomorphic. Hence, $\sigma_t$ is a composition factor of $\delta^*$ if and only if $\delta^*(t) \neq 0$.\\

We have the functor $\delta^* \in \textrm{Fun}_K(\T{}, \textrm{mod}K)$. Then $\delta^*(T)$ has $S_t$ as a composition factor if and only if $\delta^*$ has $\sigma_t$ as a composition factor, and this happens if and only if $\delta^*(t) \neq 0$. By Auslander's defect formula \cite[Theorem IV.4.1]{ARS}, if $t$ is a projective module then $\delta^*(t) = 0$. Thus, $\delta^*(T)$ has no composition factors arising as the simple top of modules of the form $\homc{}(T, t)$ where $t$ is projective in $\T{}$.
\end{proof}

\begin{proposition}\label{indexLemma}
Let $\delta: 0 \to a \to b \to c \to 0$ be an exact sequence in $\C{}$. Then
\begin{align*}
\pi_\T{}(\IndT{}(a) - \IndT{}(b) +\IndT{}(c)) = 0.
\end{align*}
\end{proposition}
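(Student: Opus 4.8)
The plan is to transport the statement into $K_0(\textrm{mod}\:\Lambda)$ via Theorem \ref{errorLemma} and the isomorphism $\kappa$ of Proposition \ref{isomRmk}, and then to decompose $[\delta^*(T)]_\Lambda$ into composition factors. By Theorem \ref{errorLemma} we have $\IndT{}(a) - \IndT{}(b) + \IndT{}(c) = \kappa^{-1}([\delta^*(T)]_\Lambda)$, so it suffices to show that $\pi_\T{}(\kappa^{-1}([\delta^*(T)]_\Lambda)) = 0$. Writing $[\delta^*(T)]_\Lambda = \sum_j [S_{t_j}]_\Lambda$ as a sum of its composition factors in $K_0(\textrm{mod}\:\Lambda)$ (using \cite[(I, Theorem 1.7)]{ARS}), where each $S_{t_j}$ is the simple top of $\homc{}(T, t_j)$ for an indecomposable $t_j \in \T{}$, Lemma \ref{compFactorLemma} guarantees that every $t_j$ occurring here is non-projective in $\T{}$. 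Since $\kappa^{-1}$ and $\pi_\T{}$ are group homomorphisms, it is then enough to prove that $\pi_\T{}(\kappa^{-1}([S_t]_\Lambda)) = 0$ for each non-projective indecomposable $t \in \T{}$.

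Next I would compute $\kappa^{-1}([S_t]_\Lambda)$ for such a $t$ by resolving $S_t$ projectively. Since $t$ is non-projective, the higher Auslander-Reiten theory of \cite{Iyama} supplies a $d$-almost split sequence ending at $t$,
\begin{align*}
\eta_t: 0 \to \tau_d t \to e_{d-1} \to \ldots \to e_0 \to t \to 0,
\end{align*}
which is a $d$-exact sequence in $\T{}$. Applying the equivalence $\homc{}(T, -)$ of Lemma \ref{equivLemma} term by term yields the minimal projective resolution of $S_t$ in $\textrm{mod}\:\Lambda$. Reading this resolution in $K_0(\textrm{mod}\:\Lambda)$ and using the defining relation $\kappa = \rho \circ \homc{}(T, -)$ exactly as in the derivation of Equation (\ref{kappaFormula}), we obtain
\begin{align*}
\kappa^{-1}([S_t]_\Lambda) = [t]_{\textrm{sp}} - \sum_{i=0}^{d-1}(-1)^i[e_i]_{\textrm{sp}} + (-1)^{d+1}[\tau_d t]_{\textrm{sp}},
\end{align*}
which is precisely the alternating sum of the terms of $\eta_t$. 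By Definition \ref{grothDef} this alternating sum lies in the kernel of $\pi_\T{}$, so $\pi_\T{}(\kappa^{-1}([S_t]_\Lambda)) = 0$. Summing over all composition factors then gives $\pi_\T{}(\kappa^{-1}([\delta^*(T)]_\Lambda)) = 0$, as required.

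The main obstacle I expect is the middle step: justifying that the minimal projective resolution of the simple $S_t$, for a non-projective indecomposable $t$, is obtained by applying $\homc{}(T, -)$ to a genuine $d$-exact sequence in $\T{}$, namely a $d$-almost split sequence. This is the point at which the full strength of higher Auslander-Reiten theory is needed: one must know both that $\Lambda$ has global dimension $d+1$, so that the resolution terminates after exactly $d+2$ terms, and that every syzygy of $S_t$ is again of the form $\homc{}(T, s)$ with $s \in \T{}$, so that the whole resolution pulls back along the equivalence of Lemma \ref{equivLemma} to a complex in $\T{}$ that is $d$-exact. Everything else is bookkeeping with the homomorphisms $\kappa$ and $\pi_\T{}$ and with the additivity of composition factors in $K_0(\textrm{mod}\:\Lambda)$ recalled before Lemma \ref{compFactorLemma}.
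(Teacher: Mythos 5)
Your proposal is correct and follows essentially the same route as the paper: reduce via Theorem \ref{errorLemma} and $\kappa$, split $[\delta^*(T)]_\Lambda$ into composition factors, invoke Lemma \ref{compFactorLemma} to see the relevant indecomposables are non-projective, and resolve each simple top by the $d$-Auslander--Reiten sequence ending at $t$ (the paper cites \cite[Theorem~3.3.1]{Iyama} for its existence and \cite[Definition~3.1]{Iyama} for the fact that the right almost split end map produces the exact sequence onto the simple top, which settles the ``middle step'' you flag as the main obstacle). The alternating sum over that $d$-exact sequence then dies under $\pi_\T{}$, exactly as you conclude.
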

\begin{proof}
We have the exact sequence $\delta$ as in the statement. By Theorem \ref{errorLemma}, we have that $[\delta^*(T)]_\Lambda = \kappa(\IndT{}(a)-\IndT{}(b) + \IndT{}(c))$. We know also that $[\delta^*(T)]_\Lambda = [s_1]_\Lambda + \ldots + [s_m]_\Lambda$, where $s_1, \ldots, s_m$ are the composition factors of $\delta^*(T)$. By lemma \ref{compFactorLemma} we see that the composition factors of $\delta^*(T)$ are simple tops of modules $\homc{}(T,t)$ where $t$ is not projective in $\T{}$. Let such an object $t_0 \in \T{}$ that is indecomposable and not projective be given. By \cite[Theorem~3.3.1]{Iyama} there exists a $d$-Auslander-Reiten sequence
\begin{align*}
0 \to t_{d+1} \to t_d \to \ldots \to t_1 \xrightarrow{\alpha} t_0 \to 0
\end{align*}
in $\T{}$. In particular, this sequence is $d$-exact, and the morphism $\alpha$ is right almost split. By \cite[Definition~3.1]{Iyama}, these properties give an exact sequence
\begin{align*}
0 \to \homc{}(T, t_{d+1}) \to \homc{}(T, t_d) \to \ldots \to \homc{}(T, t_1) \to \homc{}(T, t_0) \to s \to 0
\end{align*}
where $s$ is the simple top of the projective module $\homc{}(T, t_0)$. Then we have that 
\begin{align*}
[s]_\Lambda &= \sum_{i=0}^{d+1} (-1)^i[\homc{}(T, t_i)]_\Lambda \textrm{ in }K_0(\textrm{mod}\:\Lambda) \\
&= \kappa(\sum_{i=0}^{d+1}(-1)^i[t_i]_{\textrm{sp}}).
\end{align*}
Then for each $s_j$ composition factor of $\delta^*(T)$,
\begin{align*}
[s_j]_\Lambda=\kappa(\sum_{i=0}^{d+1}(-1)^i[t_i^j]_{\textrm{sp}}))
\end{align*}
where $0 \to t_{d+1}^j \to t_d^j \to \ldots \to t_1^j \xrightarrow{\alpha} t_0^j \to 0$ is a $d$-exact sequence in $\T{}$. This shows the following:
\begin{align*}
\kappa(\IndT{}(a)-\IndT{}(b) + \IndT{}(c)) &= [\delta^*(T)]_\Lambda \\
&= [s_1]_\Lambda + \ldots +[s_m]_\Lambda \\
&= \kappa(\sum_{i=0}^{d+1}(-1)^i[t_i^1]_{\textrm{sp}})) + \ldots + \kappa(\sum_{i=0}^{d+1}(-1)^i[t_i^m]_{\textrm{sp}}))\\
\Rightarrow \IndT{}(a)-\IndT{}(b) + \IndT{}(c) &= \sum_{i=0}^{d+1}(-1)^i[t_i^1]_{\textrm{sp}} + \ldots + \sum_{i=0}^{d+1}(-1)^i[t_i^m]_{\textrm{sp}}.
\end{align*}
This proves the result, as
\begin{align*}
\pi_\T{}(\IndT{}(a)-\IndT{}(b) + \IndT{}(c)) &= \pi_\T{}(\Sigma_{i=0}^{d+1}(-1)^i[t_i^1]_{\textrm{sp}} + \ldots + \Sigma_{i=0}^{d+1}(-1)^i[t_i^m]_{\textrm{sp}}) \\
&= 0,
\end{align*}
because $\pi_\T{}$ maps the alternating sums of $d$-exact sequences to zero.
\end{proof}
\begin{lemma}
The map $\IndT{}:K_0^{\textrm{sp}}(\C{}) \to K_0^{\textrm{sp}}(\T{})$ induces a well defined map
\begin{align*}
f: &K_0(\C{}) \to K_0(\T{}),\\
&[c]_\C{} \to \IndT{}(c).
\end{align*}
\end{lemma}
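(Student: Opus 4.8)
The plan is to invoke the universal property of the quotient $\pi_\C{}\colon K_0^{\textrm{sp}}(\C{}) \to K_0(\C{})$. By Definition \ref{grothDef} (in its abelian incarnation), $K_0(\C{})$ is the cokernel of the inclusion $\kappa_\C{}\colon \operatorname{Ker}(\pi_\C{}) \hookrightarrow K_0^{\textrm{sp}}(\C{})$, so a group homomorphism out of $K_0^{\textrm{sp}}(\C{})$ descends along $\pi_\C{}$ exactly when it annihilates $\operatorname{Ker}(\pi_\C{})$. Since $\IndT{}\colon K_0^{\textrm{sp}}(\C{}) \to K_0^{\textrm{sp}}(\T{})$ has already been defined on the split group (using additivity of the index on direct summands), I would apply this criterion to the composite $\pi_\T{} \circ \IndT{}\colon K_0^{\textrm{sp}}(\C{}) \to K_0(\T{})$. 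Concretely, it suffices to verify the vanishing $\pi_\T{} \circ \IndT{} \circ \kappa_\C{} = 0$, after which $f$ is the unique homomorphism satisfying $f \circ \pi_\C{} = \pi_\T{} \circ \IndT{}$, and it necessarily acts by $[c]_\C{} \mapsto \IndT{}(c)$ on representatives.

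First I would pin down the generators of $\operatorname{Ker}(\pi_\C{})$. Directly from Definition \ref{grothDef}, this subgroup is generated by the elements $[a]_{\textrm{sp}} - [b]_{\textrm{sp}} + [c]_{\textrm{sp}}$ as $0 \to a \to b \to c \to 0$ ranges over the short exact sequences of $\C{}$. Because $\IndT{}$ and $\pi_\T{}$ are both group homomorphisms, the composite $\pi_\T{} \circ \IndT{}$ is determined by its values on these generators, so checking that it kills $\operatorname{Ker}(\pi_\C{})$ reduces to checking, for each short exact sequence, the single identity $\pi_\T{}\bigl(\IndT{}(a) - \IndT{}(b) + \IndT{}(c)\bigr) = 0$.

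This identity is precisely the statement of Proposition \ref{indexLemma}, which has already been proved. Consequently $\pi_\T{} \circ \IndT{}$ vanishes on every generator of $\operatorname{Ker}(\pi_\C{})$, hence on the entire subgroup, and the factorisation producing $f$ follows formally. The point worth stressing is that essentially all of the genuine difficulty lives upstream of this lemma: the nontrivial input is Proposition \ref{indexLemma}, whose proof in turn rested on Theorem \ref{errorLemma} together with the identification, via $d$-Auslander--Reiten sequences, of the composition factors of $\delta^*(T)$ as simple tops of non-projective objects of $\T{}$. Given those results, the present lemma is a routine universal-property repackaging, and I do not anticipate any obstacle beyond correctly citing Proposition \ref{indexLemma} as the verification on generators.
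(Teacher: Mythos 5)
Your proposal is correct and follows exactly the same route as the paper: reduce well-definedness to the vanishing $\pi_\T{} \circ \IndT{} \circ \kappa_\C{} = 0$ on the generators of $\operatorname{Ker}(\pi_\C{})$ coming from short exact sequences, and then cite Proposition \ref{indexLemma}. The paper's own proof is a two-line version of your argument, so the only difference is that you spell out the universal-property bookkeeping explicitly.
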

\begin{proof}
To show that $f$ is well defined, it is enough to show that exact sequences in $\C{}$ are sent to zero by $\pi_\T{} \circ \IndT{}$. This is true by Proposition \ref{indexLemma}.
\end{proof}
We may update our diagram of morphisms:
\begin{center}
\begin{tikzpicture}[line cap = round, line join = round]
\node (a) at (-2, 0) {$K_0^\textrm{sp}(\mathscr{C})$};
\node (b) at (2, 0) {$K_0^\textrm{sp}(\mathscr{T})$};

\node (c) at (-2, 2) {Ker$(\pi_{\mathscr{C}})$};
\node (d) at (2, 2) {Ker$(\pi_{\mathscr{T}})$};

\node (e) at (-2, -2) {$K_0(\mathscr{C})$};
\node (f) at (2, -2) {$K_0(\mathscr{T})$.};

\draw [->] ([yshift= 0.2 cm]e.east) to node[above]{$f$} ([yshift= 0.2 cm]f.west);
\draw [->] ([yshift= -0.2 cm]f.west) to node[below]{$g$} ([yshift= -0.2 cm]e.east);

\draw [->] ([yshift= 0.2 cm]a.east) to node[above]{$\IndT$} ([yshift= 0.2 cm]b.west);
\draw [->] ([yshift= -0.2 cm]b.west) to node[below]{$\iota$} ([yshift= -0.2 cm]a.east);

\draw [->] (c) to node[right]{$\kappa_{\mathscr{C}}$} (a);
\draw [->] (d) to node[right]{$\kappa_{\mathscr{T}}$} (b);
\draw [->] (a) to node[right]{$\pi_{\mathscr{C}}$} (e);
\draw [->] (b) to node[right]{$\pi_{\mathscr{T}}$} (f);
\end{tikzpicture}
\end{center}
We show a final result concerning these morphisms.
\begin{theorem}\label{isomThm}(= Theorem \ref{secondaryResultThm})
The morphisms $f$ and $g$ are mutually inverse, and show the isomorphism $K_0(\C{}) \cong K_0(\T{})$.
\end{theorem}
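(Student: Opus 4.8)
The plan is to prove that the two composites $g \circ f$ and $f \circ g$ are the identity maps on $K_0(\C{})$ and $K_0(\T{})$ respectively, working one level up in the split Grothendieck groups and then descending along the surjections $\pi_\C{}$ and $\pi_\T{}$. By construction $g$ is induced by $\iota$ and $f$ by $\IndT{}$, which is to say $g \circ \pi_\T{} = \pi_\C{} \circ \iota$ and $f \circ \pi_\C{} = \pi_\T{} \circ \IndT{}$. Since $\pi_\C{}$ and $\pi_\T{}$ are surjective, it is enough to verify the two split-level identities $\IndT{} \circ \iota = \mathrm{id}_{K_0^{\textrm{sp}}(\T{})}$ and $\pi_\C{} \circ \iota \circ \IndT{} = \pi_\C{}$.

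The identity $f \circ g = \mathrm{id}_{K_0(\T{})}$ is the immediate direction. For $t \in \T{}$ the augmented left $\T{}$-resolution is the trivial sequence, so $\IndT{}(t) = [t]_{\textrm{sp}}$, as recorded after Definition \ref{indexDef}; since these classes generate $K_0^{\textrm{sp}}(\T{})$ this gives $\IndT{} \circ \iota = \mathrm{id}$. Hence $f \circ g \circ \pi_\T{} = f \circ \pi_\C{} \circ \iota = \pi_\T{} \circ \IndT{} \circ \iota = \pi_\T{}$, and surjectivity of $\pi_\T{}$ forces $f \circ g = \mathrm{id}_{K_0(\T{})}$.

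For $g \circ f = \mathrm{id}_{K_0(\C{})}$ I would fix $c \in \C{}$ and take its augmented left $\T{}$-resolution $0 \to t_{d-1} \to \cdots \to t_0 \to c \to 0$, which is an exact sequence in $\C{}$ by the $d$-cluster tilting property. Feeding this exact sequence into Lemma \ref{longExLem} and matching the indexing so that the signs agree, I expect to obtain $[c]_\C{} = \sum_{i=0}^{d-1}(-1)^i[t_i]_\C{}$ in $K_0(\C{})$. The right-hand side is precisely $\pi_\C{}(\iota(\IndT{}(c)))$, so $\pi_\C{} \circ \iota \circ \IndT{} = \pi_\C{}$ on the generators $[c]_{\textrm{sp}}$ and hence everywhere; as before, $g \circ f \circ \pi_\C{} = \pi_\C{} \circ \iota \circ \IndT{} = \pi_\C{}$, and surjectivity of $\pi_\C{}$ yields $g \circ f = \mathrm{id}_{K_0(\C{})}$. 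Together the two identities show $f$ and $g$ are mutually inverse and $K_0(\C{}) \cong K_0(\T{})$.

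The conceptual content here is light; the only points that require care are organisational. First, I must use the augmented left $\T{}$-resolution as a genuine exact sequence in $\C{}$ rather than merely a $\homc{}(\T{},-)$-acyclic one, which is legitimate precisely because such $d$-step resolutions are exact in $\C{}$. Second, and this is where I expect the main (if modest) friction, I must reconcile the sign convention of Lemma \ref{longExLem}, whose alternating sum is indexed by position in the sequence, with the convention of $\IndT{}(c) = \sum_{i=0}^{d-1}(-1)^i[t_i]_{\textrm{sp}}$, which is indexed by homological degree; a short reindexing shows that the two agree up to the overall sign that the lemma absorbs.
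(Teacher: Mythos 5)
Your proposal is correct and follows essentially the same route as the paper: $f\circ g=\mathrm{id}$ from $\IndT{}(t)=[t]_{\textrm{sp}}$ for $t\in\T{}$, and $g\circ f=\mathrm{id}$ by feeding the augmented left $\T{}$-resolution of $c$, viewed as an exact sequence in $\C{}$, into Lemma \ref{longExLem}. Your extra care in descending from the split Grothendieck groups along the surjections $\pi_\C{}$ and $\pi_\T{}$, and in noting explicitly that the resolution is genuinely exact in $\C{}$, only makes precise what the paper's computation on generators does implicitly.
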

\begin{proof}
We examine the compositions $f \circ g$ and $g \circ f$. Beginning with $f \circ g$, we see that for an element $[t]_\T{} \in K_0(\T{})$,
\begin{align*}
f \circ g([t]_\T{}) &= f([t]_\C{}) \\
&= [t]_\T{}.
\end{align*}
This shows that $f \circ g = \textrm{Id}_{K_0(\T{})}$. Looking at the composition $g \circ f$, we first let an object $c \in \C{}$ be given, with the augmented left $\T{}$-resolution $0 \to t_{d-1} \to \ldots \to t_0 \to c \to 0$. Then for the element $[c]_\C{} \in K_0(\C{})$,
\begin{align*}
g \circ f([c]_\C{}) &= g(\sum_{i=0}^{d-1}(-1)^i[t_i]_\T{}) \\
&=\sum_{i=0}^{d-1}(-1)^i[t_i]_\C{}\\
&=[c]_\C{}.
\end{align*}
Thus we have that $g \circ f = \textrm{Id}_{K_0(\C{})}$, and we have shown the result. 
\end{proof}
\section{Determining modules by their composition factors}
We would like to know the relationship between an object $t \in \T{}$ and the class $[t]_\T{} \in K_0(\T{})$. We begin by making the following definition.
\begin{definition}
We let \textbf{Condition H} be the condition that for two indecomposable objects $s$ and $t$ in $\T{}$, at most one of the spaces $\homp{}(s, t)$ and $\homp{}(t, \tau_d s)$ is non-zero.
\end{definition}
We will prove that under Condition H, an indecomposable object $t \in \T{}$ is determined by its class in $K_0(\C{})$. We formalise this in the following results:
\begin{lemma}\label{dependLemma}
Given $s, t \in \T{}$, the expression
\begin{align*}
\dim \homp{}(s, t) + (-1)^d \dim \homp{}(t, \tau_d s)
\end{align*}
depends only on $[t]_\T{} \in K_0(\T{})$.
\end{lemma}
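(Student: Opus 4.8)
The plan is to invoke the higher Auslander--Reiten formula of Theorem \ref{ausReitThm} to rewrite the left-hand expression as an alternating sum over a projective resolution of $s$, and then to recognise that sum as an additive invariant of $t$. Fix $s \in \T{}$ and let $P_d \to P_{d-1} \to \ldots \to P_0$ be a truncation of its minimal projective resolution in $\C{}$. By Theorem \ref{ausReitThm} we have
\begin{align*}
\dim \homp{}(s, t) + (-1)^d \dim \homp{}(t, \tau_d s) = \sum_{i=0}^d (-1)^i \dim \homp{}(P_i, t).
\end{align*}
The objects $P_0, \ldots, P_d$ depend only on $s$, so once $s$ is held fixed the right-hand side is a function of $t$ alone.

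Next I would show that this function factors through $K_0(\C{})$. Each $P_i$ is a projective $\Phi$-module, so the functor $\homp{}(P_i, -)$ is exact; consequently $t \mapsto \dim \homp{}(P_i, t)$ is additive on short exact sequences in $\C{}$, in the sense that for $0 \to a \to b \to c \to 0$ one has $\dim \homp{}(P_i, b) = \dim \homp{}(P_i, a) + \dim \homp{}(P_i, c)$. These are exactly the relations imposed in forming $K_0(\C{})$, so each $\dim \homp{}(P_i, -)$ descends to a group homomorphism $K_0(\C{}) \to \mathbb{Z}$, and hence so does the alternating sum $\sum_{i=0}^d (-1)^i \dim \homp{}(P_i, -)$. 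Therefore the left-hand side of the displayed identity depends only on the class $[t]_\C{} \in K_0(\C{})$.

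Finally I would transfer this dependence to $K_0(\T{})$. By Theorem \ref{isomThm} the induced map $g : K_0(\T{}) \to K_0(\C{})$ is an isomorphism with $g([t]_\T{}) = [t]_\C{}$ for every $t \in \T{}$; thus an equality $[t]_\T{} = [t']_\T{}$ in $K_0(\T{})$ forces $[t]_\C{} = [t']_\C{}$ in $K_0(\C{})$. Combined with the previous paragraph, equal classes in $K_0(\T{})$ produce equal values of the expression, which is precisely the assertion of the lemma.

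The main obstacle here is conceptual rather than computational: one must keep careful track of the fact that the expression is being regarded as a function of $t$ with $s$ fixed, and that the truncated resolution $P_\bullet$ is chosen once and for all independently of $t$, so that the additivity argument applies term by term and the alternating sum is genuinely a well-defined map out of $K_0(\C{})$. Granting Theorem \ref{ausReitThm} and the isomorphism of Theorem \ref{isomThm}, the remaining steps are routine.
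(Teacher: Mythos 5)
Your proof is correct, but it takes a genuinely different route from the one the paper uses for Lemma \ref{dependLemma}. The paper argues entirely inside $\T{}$: it takes a generator of $\ker \pi_\T{}$, i.e.\ the alternating sum over a $d$-exact sequence $\gamma$ in $\T{}$, applies $\homp{}(s,-)$ and $\homp{}(-,\tau_d s)$ to identify the value of $G$ on that generator as $\dim\gamma^*(s)-\dim\gamma_*(\tau_d s)$, and then kills this by the higher defect formula of Jasso--Kvamme. You instead route the argument through $K_0(\C{})$: rewrite the expression via the higher Auslander--Reiten formula (Theorem \ref{ausReitThm}) as $\sum_i(-1)^i\dim\homp{}(P_i,t)$, observe that exactness of $\homp{}(P_i,-)$ makes each term descend to $K_0(\C{})$, and pull back along $g$. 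This is essentially the strategy the paper itself deploys in Section 5 as an ``alternate proof'' of Corollary \ref{ballerCor}, so it is certainly in the spirit of the text. The trade-offs: your version depends on two later/heavier inputs (Theorem \ref{ausReitThm}, which appears after this lemma in the paper, though its proof does not use the lemma, so there is no circularity, and the comparison of $K_0(\T{})$ with $K_0(\C{})$ --- for which you only actually need the well-definedness of $g$ from Lemma \ref{inductionLemma}, not the full isomorphism of Theorem \ref{isomThm}); in exchange it yields the formally stronger conclusion that the expression depends only on $[t]_\C{}$, which feeds directly into Corollary \ref{ballerCor}. The paper's version is more self-contained at this point in the development, needing only the defect machinery and the cited formula $\dim\gamma^*(s)=\dim\gamma_*(\tau_d s)$.
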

\begin{proof}
To see this result, we may show that there exists a factorisation $h$:
\begin{center}
\begin{tikzpicture}[line cap = round, line join = round]
\node (a) at (-4, 2) {$K_0^\textrm{sp}(\T{})$};
\node (b) at (-4, 0) {$K_0(\mathscr{T})$};
\node (c) at (6, 2) {$\mathbb{Z}$.};

\draw [->] (a) to node[above]{$\dim \homp{}(s, -) + (-1)^d \dim \homp{}(-, \tau_d s)$} (c);
\draw [->] (a) to node[right]{$\pi_\T{}$} (b);
\draw [dashed,->] (b) to node[below]{$h$} (c);
\end{tikzpicture}
\end{center}
To show that $h$ exists, we must show that the map $G:=\dim \homp{}(s, -) + (-1)^d \dim \homp{}(-, \tau_d s)$ vanishes on $\textrm{ker}\:\pi_\T{}$, which is generated by alternating sums over the $d$-exact sequences in $\T{}$.\\

Let a $d$-exact sequence
\begin{align*}
\gamma: 0 \to t_{d+1} \to \ldots \to t_0 \to 0
\end{align*}
in $\T{}$ be given. Then we have exact sequences
\begin{align*}
0 \to \homp{}(s, t_{d+1}) \to \ldots \to \homp{}(s, t_0) \to \gamma^*(s) \to 0
\end{align*}
and
\begin{align*}
0 \to \homp{}(t_0,\tau_d s) \to \ldots \to \homp{}(t_{d+1}, \tau_d s) \to \gamma_*(\tau_d s) \to 0.
\end{align*}
This then shows that 
\begin{align*}
\dim \homp{}(s, \sum_{i=0}^{d+1}(-1)^i[t_i]_\T{}) = \sum_{i=0}^{d+1}(-1)^i \dim \homp{}(s, t_i) = \dim \gamma^*(s)
\end{align*}
and
\begin{align*}
\dim \homp{}(\sum_{i=0}^{d+1}(-1)^i[t_i]_\T{}, \tau_d s) = \sum_{i=0}^{d+1}(-1)^i \dim \homp{}(t_i, \tau_d s) = -(-1)^d\dim \gamma_*(\tau_d s).
\end{align*}
Thus, when we apply the map $G$ to the alternating sums of $K$-classes of the $d$-exact sequence $\gamma$, we obtain the expression
\begin{align*}
&\dim \gamma^*(s) + (-1)^d(-(-1)^d)\dim \gamma_*(\tau_d s) \\
= &\dim \gamma^*(s) -\dim \gamma_*(\tau_d s).
\end{align*}
By \cite[Theorem~3.8]{JassoKvamme} $\dim \gamma^*(s) -\dim \gamma_*(\tau_d s) = 0$. As such we have proven the result.
\end{proof}
\begin{theorem}\label{indexPreTheorem}(= Theorem \ref{tertiaryResultThm})
Suppose that $d$ is odd, and $\T{}$ satisfies Condition H. Then an indecomposable object $t \in \T{}$ is determined up to isomorphism by its class $[t]_\T{} \in K_0(\T{})$.
\end{theorem}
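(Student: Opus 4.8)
The plan is to show that the entire ``Hom-dimension vector'' of $t$ can be recovered from the single invariant $[t]_\T{}$, and then to read off $t$ from that vector. Assume $t,t'\in\T{}$ are indecomposable with $[t]_\T{}=[t']_\T{}$; the goal is $t\cong t'$. Since $d$ is odd, $(-1)^d=-1$, so Lemma~\ref{dependLemma} tells us that for each indecomposable $s\in\T{}$ the integer
\begin{align*}
G(s,x):=\dim\homp{}(s,x)-\dim\homp{}(x,\tau_d s)
\end{align*}
depends only on the class $[x]_\T{}\in K_0(\T{})$. In particular $G(s,t)=G(s,t')$ for every indecomposable $s$.

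The key step is to upgrade this equality of signed differences to an equality of the individual dimensions $\dim\homp{}(s,-)$, and this is exactly where Condition~H is used. Fix an indecomposable $s$ and let $x\in\{t,t'\}$. By Condition~H at most one of $\homp{}(s,x)$ and $\homp{}(x,\tau_d s)$ is nonzero, so the two summands of $G(s,x)$ cannot both be positive: we have $G(s,x)=\dim\homp{}(s,x)>0$ when the first space is nonzero, and $G(s,x)=-\dim\homp{}(x,\tau_d s)\le 0$ otherwise. In either case
\begin{align*}
\dim\homp{}(s,x)=\max\{G(s,x),0\}.
\end{align*}
(The degenerate situations, where $\tau_d s$ is not defined or is zero, only make the second summand vanish and are harmless.) Since the right-hand side depends only on $[x]_\T{}$, I conclude that
\begin{align*}
\dim\homp{}(s,t)=\dim\homp{}(s,t')\qquad\text{for every indecomposable }s\in\T{}.
\end{align*}

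It then remains to deduce $t\cong t'$ from this equality of Hom-dimension vectors, which I would do by transporting everything to $\Lambda=\textrm{End}_\C{}(T)$. Under the equivalence $\homc{}(T,-)\colon\T{}\to\textrm{proj}\,\Lambda$ of Lemma~\ref{equivLemma}, write $P_s=\homc{}(T,s)$; full faithfulness gives $\dim\homp{}(s,t)=\dim\textrm{Hom}_\Lambda(P_s,P_t)$. Because $K$ is algebraically closed, $\textrm{End}_\Lambda(S_s)=K$, so $\dim\textrm{Hom}_\Lambda(P_s,P_t)$ equals the multiplicity of the simple module $S_s$ (the top of $P_s$) as a composition factor of $P_t$. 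Thus the displayed equality says precisely that $P_t$ and $P_{t'}$ have the same composition factors, i.e.\ $[P_t]_\Lambda=[P_{t'}]_\Lambda$ in $K_0(\textrm{mod}\,\Lambda)$. Applying the inverse of the isomorphism $\rho$ of Lemma~\ref{hymanLemma} yields $[P_t]_{\textrm{sp}}=[P_{t'}]_{\textrm{sp}}$ in $K_0^{\textrm{sp}}(\textrm{proj}\,\Lambda)$; since this split Grothendieck group is free on isomorphism classes of indecomposables and $P_t,P_{t'}$ are indecomposable, we get $P_t\cong P_{t'}$, hence $t\cong t'$.

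The main obstacle is the middle step. Without Condition~H the invariant $G(s,t)$ records only the difference $\dim\homp{}(s,t)-\dim\homp{}(t,\tau_d s)$, and this difference alone does not determine $t$ (one can build non-isomorphic indecomposables with equal classes). Condition~H, together with the oddness of $d$ that pins the sign to $-1$, is exactly what forces the two terms into complementary positive and negative halves, so that taking the positive part extracts the genuine dimension $\dim\homp{}(s,t)$. I would therefore spend the most care on verifying this positivity dichotomy in all cases, in particular when $s$ is projective in $\T{}$ and $\tau_d s=0$; once the full Hom-dimension vector is known, the passage to $K_0(\textrm{mod}\,\Lambda)$ and the invertibility of $\rho$ finish the argument cleanly.
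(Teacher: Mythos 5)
Your proof is correct and follows the same route as the paper: Lemma~\ref{dependLemma} makes $\dim\homp{}(s,t)+(-1)^d\dim\homp{}(t,\tau_d s)$ an invariant of $[t]_\T{}$, Condition~H together with $(-1)^d=-1$ extracts $\dim\homp{}(s,t)$ as the positive part, and the resulting Hom-dimension vector determines $t$. The only difference is at the last step, where the paper simply cites \cite[Proposition~3.9]{Reid}, while you prove that fact directly via the equivalence $\T{}\simeq\textrm{proj}\,\Lambda$ and composition-factor multiplicities; your argument there is sound and self-contained, so this is a harmless (indeed clarifying) substitution rather than a gap.
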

\begin{proof}
Let an indecomposable $t \in \T{}$ be given. For every indecomposable $s \in \T{}$, we have by Lemma \ref{dependLemma} that $\dim \homp{}(s, t) + (-1)^d \dim \homp{}(t, \tau_d s)$ is determined by $[t]_\T{}$. Combining this with Condition H and the fact that $(-1)^d=-1$, we see that $\dim \homp{}(s, t)$ is determined by $[t]_\T{}$. By \cite[Proposition~3.9]{Reid}, as this is true for all indecomposables $s \in \T{}$, we have determined $t$ up to isomorphism by $[t]_\T{}$.
\end{proof}

\begin{corollary}\label{ballerCor}
Suppose that $d$ is odd, and $\T{}$ satisfies Condition H. Then an indecomposable object $t \in \T{}$ is determined up to isomorphism by its class $[t]_\C{} \in K_0(\C{})$.
\end{corollary}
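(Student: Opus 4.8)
The plan is to transport the statement across the isomorphism $K_0(\C{}) \cong K_0(\T{})$ established in Theorem \ref{isomThm}, thereby reducing the claim to Theorem \ref{indexPreTheorem}, which has already settled the analogous statement phrased in terms of the class $[t]_\T{} \in K_0(\T{})$.

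First I would invoke Theorem \ref{isomThm} to recall that the induced map $f: K_0(\C{}) \to K_0(\T{})$ is an isomorphism, with inverse $g$. In particular $f$ is injective, so the class $[t]_\C{}$ determines its image $f([t]_\C{})$ in $K_0(\T{})$. Next I would identify this image explicitly. Since $t \in \T{}$, its augmented left $\T{}$-resolution is the trivial sequence, and so $\IndT{}(t) = [t]_{\textrm{sp}}$, as observed immediately after the remark on well-definedness of the index. Recalling from the construction of $f$ that $f([c]_\C{}) = \pi_\T{}(\IndT{}(c))$, this yields
\begin{align*}
f([t]_\C{}) = \pi_\T{}(\IndT{}(t)) = \pi_\T{}([t]_{\textrm{sp}}) = [t]_\T{}.
\end{align*}
Hence knowing $[t]_\C{}$ is equivalent, via the isomorphism $f$, to knowing $[t]_\T{}$.

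Finally I would apply Theorem \ref{indexPreTheorem}: under the standing hypotheses that $d$ is odd and $\T{}$ satisfies Condition H, the indecomposable $t$ is determined up to isomorphism by $[t]_\T{}$. Chaining this with the previous step shows that $t$ is determined up to isomorphism by $[t]_\C{}$, as required. There is no substantive obstacle in this argument; the only point demanding care is the verification that $f$ carries $[t]_\C{}$ to precisely $[t]_\T{}$ for objects already lying in $\T{}$, and this is immediate from the identity $\IndT{}(t) = [t]_{\textrm{sp}}$ together with the description of $f$ as the map induced by the index.
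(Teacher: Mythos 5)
Your proposal is correct and takes essentially the same route as the paper: the paper's proof is exactly the combination of Theorem \ref{indexPreTheorem} with the isomorphism of Theorem \ref{isomThm}. Your explicit check that $f([t]_\C{}) = [t]_\T{}$ via $\IndT{}(t) = [t]_{\textrm{sp}}$ is a worthwhile detail the paper leaves implicit, but it does not change the argument.
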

\begin{proof}
Theorem \ref{indexPreTheorem} shows that each $t$ is determined up to isomorphism by $[t]_\T{}$, and the isomorphism proven in Theorem \ref{isomThm} shows that $t$ is also determined by $[t]_\C{}$.
\end{proof}
The natural question arises as to where Condition H is achieved. We show a Corollary that highlights an important case where the assumption is met. This is a known result, proven by different methods by Mizuno in \cite[Theorem~1.2(2)]{Mizuno}.

\begin{corollary}\label{uniqueDefCor}
Let the finite dimensional $K$-algebra $\Phi$ be $d$-representation finite. Then an indecomposable object $t \in \T{}$ is determined up to isomorphism by its class $[t]_\C{} \in K_0(\C{})$.
\end{corollary}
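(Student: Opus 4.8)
The plan is to deduce this corollary from Corollary \ref{ballerCor}: that result already shows that an indecomposable $t \in \T{}$ is determined up to isomorphism by $[t]_\C{}$ as soon as $d$ is odd and $\T{}$ satisfies Condition H, so it suffices to verify that a $d$-representation finite algebra $\Phi$ meets these hypotheses. The parity requirement plays a purely formal role (it is what allows the sign $(-1)^d$ to separate the two Hom-dimensions in the proof of Theorem \ref{indexPreTheorem}), so the real content lies in checking Condition H. For the parities not covered by Corollary \ref{ballerCor} I would simply defer to Mizuno's theorem \cite[Theorem~1.2(2)]{Mizuno}, which establishes the statement by independent means; the interest of the present route is that, when $d$ is odd, the conclusion drops out of the index machinery once Condition H is in place.

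First I would recall the structure theory of $d$-representation finite algebras. Such a $\Phi$ admits an (up to isomorphism unique) basic $d$-cluster tilting module $T$ with $\T{} = \textrm{add}(T)$, and is $d$-hereditary, so that $\tau_d$ restricts to a bijection between the non-projective and the non-injective indecomposables of $\T{}$. The key input is the $d$-Auslander--Reiten (higher Serre) duality of \cite[1.4.1]{Iyama}, which for indecomposables $s, t \in \T{}$ supplies a functorial isomorphism of the form $\homp{}(t, \tau_d s) \cong D\,\overline{\textrm{Ext}}^d_\Phi(s, t)$, where $D = \textrm{Hom}_K(-,K)$ and the bar denotes the appropriate projectively-stable quotient.

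With this duality in hand, Condition H becomes the assertion that, for indecomposables $s, t \in \T{}$, at most one of $\homp{}(s, t)$ and $\textrm{Ext}^d_\Phi(s, t)$ is non-zero. This is the higher analogue of the directedness enjoyed by representation finite hereditary algebras, and I would derive it from the $d$-hereditary structure: the indecomposables of $\T{}$ are organised into $\tau_d$-orbits along which $\homp{}$ propagates strictly forwards while $\textrm{Ext}^d_\Phi$ propagates strictly backwards, so the two spaces cannot be simultaneously non-zero.

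The main obstacle is precisely this last step: pinning down the $\homp{}$--$\textrm{Ext}^d_\Phi$ orthogonality directly from the definition of $d$-representation finiteness, rather than assuming it. Once both the orthogonality and the duality isomorphism are established, Condition H is immediate, and invoking Corollary \ref{ballerCor} (for odd $d$) together with \cite[Theorem~1.2(2)]{Mizuno} (in general) completes the argument. I would expect the bookkeeping around projective and injective summands — where $\tau_d$ is only partially defined and the stable and costable quotients appearing in the duality must be tracked carefully — to be the most delicate part of writing this out in full.
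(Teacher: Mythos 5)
Your overall strategy --- verify Condition H for a $d$-representation finite algebra and then invoke Corollary \ref{ballerCor} --- is the same as the paper's, but the proposal has a genuine gap at the one point where the real work lies: you never actually establish Condition H. After translating it (via higher Auslander--Reiten duality) into the assertion that for indecomposables $s,t \in \T{}$ at most one of $\homp{}(s,t)$ and $\textrm{Ext}^d_\Phi(s,t)$ is non-zero, you justify this orthogonality only by the heuristic that ``$\homp{}$ propagates strictly forwards while $\textrm{Ext}^d_\Phi$ propagates strictly backwards along $\tau_d$-orbits,'' and you explicitly concede that pinning this down ``rather than assuming it'' is the main obstacle. That concession is the entire content of the corollary. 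The paper closes exactly this gap with a concrete computation: by \cite[Proposition~1.3]{IyamaAus} every indecomposable of $\T{}$ has the form $\tau_d^a i$ with $i$ indecomposable injective, so writing $s = \tau_d^a i$ and $t = \tau_d^b j$ and using $\nu_d M \cong \tau_d M$ for non-projective $M$ one gets $\homp{}(s,t) = \textrm{Hom}_{D^b(\C{})}(\nu_d^a i, \nu_d^b j)$ and $\homp{}(t,\tau_d s) = \textrm{Hom}_{D^b(\C{})}(\nu_d^b j, \nu_d^{a+1} i)$; the dual of \cite[Proposition~2.3(b)]{HIO} gives $\textrm{Hom}_{D^b(\C{})}(\nu_d^k(D\Phi),\nu_d^l(D\Phi)) = 0$ for $k<l$, and since at least one of $a<b$ and $b<a+1$ always holds, at most one of the two spaces is non-zero. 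Your sketch would need an argument of exactly this kind (or a precise citation of a directedness statement for $d$-hereditary algebras) before it counts as a proof.

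On a secondary point, you are right to flag that Corollary \ref{ballerCor} carries the hypothesis that $d$ be odd while the statement of Corollary \ref{uniqueDefCor} does not; the paper's own proof silently invokes Corollary \ref{ballerCor} regardless, so your suggestion to fall back on \cite[Theorem~1.2(2)]{Mizuno} for the remaining parity is a reasonable patch, though it makes that case rest entirely on an external result rather than on the index machinery developed here.
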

\begin{proof}
To prove this result, we must prove that for two indecomposable objects $s $ and $t \in \T{}$, at most one of the spaces $\homp{}(s, t)$ and $\homp{}(t, \tau_d s)$ is non-zero. Then we may apply Corollary \ref{ballerCor} to finish the proof. \\

Let $s, t \in \textrm{Ind}\:\T{}$. By \cite[Proposition~1.3]{IyamaAus} we know that $s = \tau_d^a i$ and $t = \tau_d^b j$ for $a,b \geq 0$ and where $i,j$ are indecomposable injective modules. Then we have that 
\begin{align*}
\homp{}(s,t) &= \homp{}(\tau_d^a i, \tau_d^b j) \\
\homp{}(t, \tau_d s) &= \homp{}(\tau_d^b j, \tau_d^{a+1} i).
\end{align*}
We may investigate these spaces further using derived category theory. From \cite[Section 2]{HIO}, we have the derived Nakayama functor
\begin{align*}
\nu: D\textbf{R}\homp{}(-, \Phi): D^b(\C{}) \to D^b(\C{}),
\end{align*}
and the $d$-Nakayama functor
\begin{align*}
\nu_d: \Sigma^{-d} \circ \nu.
\end{align*}
By viewing modules as complexes concentrated at degree zero, we see that for a non-projective module $M \in \C{}$, \cite[Section~2.3]{HIO} gives that $\nu_d M =\tau_d M$; that is to say that $\nu_d M$ and $\tau_d M$ are isomorphic in the bounded derived category of $\C{}$. By assuming that $s$ is non-zero we see that $\tau_d^c i$ is non-projective for all $c < a$. Similarly, $\tau_d^c j$ is non-projective for all $c < b$. This shows that
\begin{align}\label{conditionSeq1}
\homp{}(s,t) &= \homp{}(\tau_d^a i, \tau_d^b j) = \textrm{Hom}_{D^b(\C{})}(\nu_d^a i, \nu_d^b j),
\end{align}
and that 
\begin{align}\label{conditionSeq2}
\homp{}(t, \tau_d s) &= \homp{}(\tau_d^b j, \tau_d^{a+1} i)= \textrm{Hom}_{D^b(\C{})}(\nu_d^b j, \nu_d^{a+1} i)
\end{align}
when $s$ is non-projective, and $\homp{}(t, \tau_d s) = 0$ otherwise. \\

We need a final property to complete the proof, which is the dual of \cite[Proposition~2.3(b)]{HIO}:
\begin{align*}
\textrm{Hom}_{D^b(\C{})}(\nu_d^k(D\Phi),\nu_d^l(D\Phi)) = 0 \textrm{ for } k < l.
\end{align*}
As all indecomposable injective modules are summands of $D \Phi$, we also have that
\begin{align*}
\textrm{Hom}_{D^b(\C{})}(\nu_d^k(i),\nu_d^l(j)) = 0 \textrm{ for } k < l
\end{align*}
and
\begin{align*}
\textrm{Hom}_{D^b(\C{})}(\nu_d^k(j),\nu_d^l(i)) = 0 \textrm{ for } k < l.
\end{align*}
This fact, combined with equations (\ref{conditionSeq1}) and (\ref{conditionSeq2}), shows that when $\Phi$ is $d$-representation finite then $\T{}$ satisfies Condition H. Applying Corollary \ref{ballerCor} finishes the proof.
\end{proof}
We may give a final result here. There is a classic result from homological algebra that under certain conditions, modules can be determined by their composition factors. This was shown by Auslander and Reiten in their paper ``Modules determined by their composition factors'', \cite[Section~1]{AusReit}. We generalise this result to the higher homological setting.
\begin{corollary}\label{finalCor}
Suppose that $\T{}$ satisfies Condition H. Let an indecomposable object $t \in \T{}$ be given. Then $t$ is determined up to isomorphism by its composition factors.
\end{corollary}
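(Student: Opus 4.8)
The plan is to reduce the statement to Corollary \ref{ballerCor}, which already asserts that under Condition H an indecomposable $t \in \T{}$ is determined up to isomorphism by its class $[t]_\C{} \in K_0(\C{})$. The only work remaining is to translate the data encoded in $[t]_\C{}$ into the language of composition factors, and for this I would lean on the structure of $K_0(\C{}) = K_0(\textrm{mod}\:\Phi)$.

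First I would recall the standard description of $K_0(\textrm{mod}\:\Phi)$: it is the free abelian group on the isomorphism classes of simple $\Phi$-modules, and for any $M \in \C{}$ one has $[M]_\C{} = [s_1]_\C{} + \ldots + [s_n]_\C{}$ in $K_0(\C{})$, where $s_1, \ldots, s_n$ are the composition factors of $M$ counted with multiplicity. This is exactly the abelian-category analogue of the fact \cite[(I, Theorem 1.7)]{ARS} already invoked in the proof of Proposition \ref{indexLemma}, applied to $\C{}$ rather than to $\textrm{mod}\:\Lambda$. The immediate consequence I need is only one implication: the multiset of composition factors of an object determines its class in $K_0(\C{})$. (Freeness on the simples gives the converse as well, so the two pieces of data are in fact equivalent, but this direction is all that the argument requires.)

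Second, I would apply this to two indecomposable objects $t, t' \in \T{}$ having the same composition factors in $\C{}$. By the previous paragraph this forces $[t]_\C{} = [t']_\C{}$ in $K_0(\C{})$. Invoking Corollary \ref{ballerCor} then yields $t \cong t'$, which is precisely the assertion that $t$ is determined up to isomorphism by its composition factors.

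Since the substantive content of the result is entirely carried by Corollary \ref{ballerCor} (and, through it, by the isomorphism $K_0(\C{}) \cong K_0(\T{})$ of Theorem \ref{isomThm} together with Theorem \ref{indexPreTheorem}), I do not anticipate a genuine obstacle. The one point to handle with care is the bookkeeping in the first step: one must be explicit that the coefficient of $[S]_\C{}$ in $[t]_\C{}$ equals the multiplicity of $S$ as a composition factor of $t$, so that passing from composition factors to the Grothendieck class loses no information. With that in place the corollary is a direct reinterpretation of Corollary \ref{ballerCor}, recovering in the higher homological setting the classical conclusion of Auslander and Reiten \cite{AusReit}.
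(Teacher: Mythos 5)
Your argument is exactly the paper's: the published proof simply combines Corollary \ref{ballerCor} with \cite[(I, Theorem 1.7)]{ARS}, which is precisely the reduction you describe (composition factors determine $[t]_\C{}$, and $[t]_\C{}$ determines $t$ under Condition H). Your write-up just makes the bookkeeping explicit; no difference in substance.
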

\begin{proof}
This is simply a combination of Corollary \ref{ballerCor} and \cite[(I, Theorem 1.7)]{ARS}.
\end{proof}

\section{Generalising a formula by Auslander and Reiten}
In this section, we will generalise a classical result from Auslander and Reiten, and use it to provide an alternative proof to Corollary \ref{ballerCor}. This is the classical result from Auslander and Reiten which states:
\begin{theorem}\cite[Theorem~1.4]{AusReit}
(a) Let $s$ and $t$ be arbitrary modules over an artin algebra $\Lambda$. Suppose that $P_1 \to P_0 \to s \to 0$ is a minimal projective presentation. Then we have
\begin{align*}
\dim \textrm{Hom}_\Lambda(s, t) - \dim \textrm{Hom}_\Lambda(t, \tau s) = \dim \textrm{Hom}_\Lambda(P_0, t) - \dim \textrm{Hom}_\Lambda(P_1, t)
\end{align*}
where $\tau$ is the Auslander-Reiten translation.
\end{theorem}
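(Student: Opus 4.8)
The plan is to compute all three Hom-dimensions at once by applying the contravariant functor $\textrm{Hom}_\Lambda(-,t)$ to the minimal projective presentation $P_1 \xrightarrow{f} P_0 \to s \to 0$ and extracting a four-term exact sequence whose Euler characteristic vanishes. First I would use left-exactness of $\textrm{Hom}_\Lambda(-,t)$ to obtain the exact sequence
\[
0 \to \textrm{Hom}_\Lambda(s,t) \to \textrm{Hom}_\Lambda(P_0,t) \xrightarrow{f^{\ast}} \textrm{Hom}_\Lambda(P_1,t),
\]
which exhibits $\textrm{Hom}_\Lambda(s,t)$ as $\ker f^{\ast}$. The entire content of the proof is then to identify the cokernel of $f^{\ast}$, since once all four terms of the sequence are known the alternating sum of their $K$-dimensions is forced to vanish.

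To identify this cokernel I would pass through the transpose. Writing $(-)^{\ast} = \textrm{Hom}_\Lambda(-,\Lambda)$, there is a natural isomorphism $\textrm{Hom}_\Lambda(P,t) \cong P^{\ast} \otimes_\Lambda t$ for finitely generated projective $P$, under which the map $\textrm{Hom}_\Lambda(P_0,t) \to \textrm{Hom}_\Lambda(P_1,t)$ is identified with $P_0^{\ast} \otimes_\Lambda t \to P_1^{\ast} \otimes_\Lambda t$. By definition of the transpose, applying $(-)^{\ast}$ to the presentation yields the exact sequence $P_0^{\ast} \to P_1^{\ast} \to \textrm{Tr}\,s \to 0$, and right-exactness of $- \otimes_\Lambda t$ then identifies the cokernel as $\textrm{Tr}\,s \otimes_\Lambda t$. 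Splicing this in produces the four-term exact sequence
\[
0 \to \textrm{Hom}_\Lambda(s,t) \to \textrm{Hom}_\Lambda(P_0,t) \to \textrm{Hom}_\Lambda(P_1,t) \to \textrm{Tr}\,s \otimes_\Lambda t \to 0,
\]
whose vanishing Euler characteristic reads
\[
\dim \textrm{Hom}_\Lambda(s,t) - \dim \textrm{Hom}_\Lambda(P_0,t) + \dim \textrm{Hom}_\Lambda(P_1,t) - \dim(\textrm{Tr}\,s \otimes_\Lambda t) = 0.
\]
I would remark that this four-term sequence is precisely the $d=1$ instance of the contravariant defect of Definition \ref{defectDef}, so the computation is the prototype for the one needed in the higher case.

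The last step, which I expect to be the main obstacle because it is where the translate $\tau$ enters and where the left/right module structures must be tracked with care, is to show $\dim(\textrm{Tr}\,s \otimes_\Lambda t) = \dim \textrm{Hom}_\Lambda(t,\tau s)$. For this I would invoke the Hom-tensor duality over the artin algebra, $\textrm{Hom}_\Lambda(t, DX) \cong D(X \otimes_\Lambda t)$, applied to the right module $X = \textrm{Tr}\,s$; since $\tau s = D\,\textrm{Tr}\,s$ by definition of the Auslander-Reiten translation, this gives $\textrm{Hom}_\Lambda(t,\tau s) \cong D(\textrm{Tr}\,s \otimes_\Lambda t)$, and as the duality $D$ preserves $K$-dimension the two dimensions coincide. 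Substituting into the Euler relation and rearranging yields
\[
\dim \textrm{Hom}_\Lambda(s,t) - \dim \textrm{Hom}_\Lambda(t,\tau s) = \dim \textrm{Hom}_\Lambda(P_0,t) - \dim \textrm{Hom}_\Lambda(P_1,t),
\]
as required. I would note finally that the Euler-characteristic identity is insensitive to adjoining trivial split summands $Q = Q$ to the presentation; minimality is imposed only to normalise $\textrm{Tr}\,s$, and hence $\tau s$, to its reduced form free of projective, respectively injective, summands.
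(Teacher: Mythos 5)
The paper does not prove this classical statement itself (it is quoted with a citation to Auslander--Reiten), but your argument is correct and is precisely the $d=1$ case of the proof the paper gives for its generalisation, Theorem \ref{ausReitThm}: dualise the presentation with $(-)^{*}=\textrm{Hom}_\Lambda(-,\Lambda)$, tensor with $t$, splice on $\textrm{Hom}_\Lambda(s,t)$ by left-exactness, take the Euler characteristic of the resulting four-term exact sequence, and convert $\textrm{Tr}\,s\otimes_\Lambda t$ into $\textrm{Hom}_\Lambda(t,\tau s)$ via the duality $D$. Nothing further is needed.
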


We will generalise this result for the $d$-abelian case. We note that the higher Auslander-Reiten translation, denoted by $\tau_d$, is defined as $\tau_d:= D \textrm{Tr}_d$; for more details see \cite{JassoKvamme}.
\begin{theorem}\label{ausReitThm}(= Theorem \ref{otherResult})
Let two objects $s, t \in \T{}$ be given, and let $P_d \to P_{d-1} \to \ldots \to P_0$ be a truncation of the minimal projective resolution of $s$. Then we have that
\begin{align*}
\dim{} \homp{}(s,t) + (-1)^d \dim{} \homp(t, \tau_d s) = \sum_{i=0}^d (-1)^i \dim{} \homp{}(P_i, t).
\end{align*}
\end{theorem}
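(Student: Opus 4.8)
The plan is to apply $\homp(-,t)$ to the truncated minimal projective resolution and extract the identity as an Euler characteristic. Writing $P_\bullet$ for the complex $P_d \to \cdots \to P_0$ placed in cohomological degrees $d, \ldots, 0$, the sequence $P_d \to \cdots \to P_0 \to s \to 0$ is exact. Applying $\homp(-,t)$ produces a bounded cochain complex
\[
C^\bullet:\ \homp(P_0,t) \to \homp(P_1,t) \to \cdots \to \homp(P_d,t)
\]
of finite-dimensional $K$-spaces. Since the alternating sum of dimensions of such a complex agrees with that of its cohomology, I would record
\[
\sum_{i=0}^d (-1)^i \dim \homp(P_i,t) = \sum_{i=0}^d (-1)^i \dim H^i(C^\bullet),
\]
reducing the theorem to identifying the three relevant pieces of cohomology of $C^\bullet$.

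Next I would identify those cohomology groups. At the bottom, left-exactness of $\homp(-,t)$ applied to $P_1 \to P_0 \to s \to 0$ gives $H^0(C^\bullet) \cong \homp(s,t)$. In the middle range $0 < i < d$ the differentials surrounding $P_i$ are literally those of the full minimal projective resolution of $s$, so $H^i(C^\bullet) \cong \textrm{Ext}^i_\C(s,t)$. At the top, I would use the natural isomorphism $\homp(P_i,t) \cong t \otimes_\Phi P_i^*$ for finitely generated projective $P_i$, where $(-)^* = \textrm{Hom}_\Phi(-,\Phi)$, to identify $C^\bullet$ with $t \otimes_\Phi P_\bullet^*$; its top cohomology is then the cokernel $\textrm{coker}(t \otimes_\Phi P_{d-1}^* \to t \otimes_\Phi P_d^*) \cong t \otimes_\Phi \textrm{Tr}_d s$, recalling that the $d$-transpose is $\textrm{Tr}_d s = \textrm{coker}(P_{d-1}^* \to P_d^*)$. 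Invoking the tensor--hom duality $D(t \otimes_\Phi N) \cong \homp(t, DN)$ together with $\tau_d s = D\,\textrm{Tr}_d s$ converts this into $\dim H^d(C^\bullet) = \dim(t \otimes_\Phi \textrm{Tr}_d s) = \dim \homp(t, \tau_d s)$.

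It remains to kill the middle. Since $s, t \in \T$ and $\T$ is $d$-cluster tilting, Definition \ref{dClusterDefn} forces $\textrm{Ext}^i_\C(s,t) = 0$ for $1 \le i \le d-1$, so $H^i(C^\bullet) = 0$ in the whole range $0 < i < d$. Substituting $H^0 \cong \homp(s,t)$, the vanishing of the middle, and $\dim H^d = \dim \homp(t, \tau_d s)$ into the Euler-characteristic identity yields exactly $\dim \homp(s,t) + (-1)^d \dim \homp(t, \tau_d s) = \sum_{i=0}^d (-1)^i \dim \homp(P_i,t)$.

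The conceptual heart is the reduction to an Euler characteristic; the two delicate points are bookkeeping and vanishing. I expect the main obstacle to be tracking the left/right module structures through the chain of natural isomorphisms $\homp(P_i,t) \cong t \otimes_\Phi P_i^*$ and $D(t \otimes_\Phi \textrm{Tr}_d s) \cong \homp(t, \tau_d s)$, and checking that these are compatible with the differentials of $C^\bullet$ so that the identification of $H^d$ is legitimate; once the transpose is set up correctly this is formal. The vanishing of the middle cohomology, which would be the crux of a first-principles argument, is here supplied directly by the $d$-cluster tilting hypothesis, so the proof is essentially a careful assembly of standard dualities around that input.
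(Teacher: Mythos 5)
Your proposal is correct and follows essentially the same route as the paper: both apply $\homp{}(-,t)$ to the truncated resolution, use the $d$-cluster tilting hypothesis to kill $\textrm{Ext}^{1},\ldots,\textrm{Ext}^{d-1}$, identify the degree-$0$ term as $\homp{}(s,t)$ by left exactness, and identify the top term as $\textrm{Tr}_d s \otimes_\Phi t$ via the tensor--hom isomorphism and the duality $D(\textrm{Tr}_d s \otimes_\Phi t) \cong \homp{}(t,\tau_d s)$. Your packaging of the alternating sum as an Euler characteristic of the complex, versus the paper's explicit six-plus-term exact sequence, is only a presentational difference.
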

\begin{proof}
We denote by $(-)^*$ the functor $\homp{}(-,\Phi)$. Then we have an exact sequence
\begin{align*}
P_0^* \to P_1^* \to \ldots \to P_d^* \to \textrm{Tr}_d s \to 0,
\end{align*}
where $\textrm{Tr}_d s$ is the $d$-th transpose of $s$. We then apply the functor $- \otimes_\Phi t$, to give the sequence
\begin{align*}
P_0^* \otimes_\Phi t \to P_1^* \otimes_\Phi t \to \ldots \to P_d^* \otimes_\Phi t \to \textrm{Tr}_d s \otimes_\Phi t \to 0.
\end{align*}
We may then use the standard tensor isomorphism to obtain the sequence
\begin{align*}
\homp{}(P_0, t) \to \homp{}(P_1, t) \to \ldots \to \homp{}(P_d, t) \to \textrm{Tr}_d s \otimes_\Phi t \to 0.
\end{align*}
We note here that due to $\T{}$ being a $d$-cluster tilting category, this is an exact sequence as we have that $\textrm{Ext}^i(s,t) = 0$ for all $0 < i < d$. We also note that as $\homp{}(-, t)$ is a left exact functor, we may augment this sequence to the long exact sequence
\begin{align*}
0\to \homp{}(s, t) \to \homp{}(P_0, t) \to \homp{}(P_1, t) \to \ldots \to \homp{}(P_d, t) \to \textrm{Tr}_d s \otimes_\Phi t \to 0.
\end{align*}
This gives us that 
\begin{align*}
\dim \homp{}(s, t) - \dim \homp{}(P_0, t) +  \ldots - (-1)^d \dim \homp{}(P_d, t) +(-1)^d \dim \textrm{Tr}_d s \otimes_\Phi t = 0,
\end{align*}
from which it follows that 
\begin{align*}
\dim \homp{}(s, t) + (-1)^d \dim \textrm{Tr}_d s \otimes_\Phi t = \sum_{i=0}^d (-1)^i \dim{} \homp{}(P_i, t).
\end{align*}
Finally, we see that $D(\textrm{Tr}_d s \otimes_\Phi t) \cong \homp{}(t, D \textrm{Tr}_d s) = \homp{}(t, \tau_d s)$, which proves the result.
\end{proof}

We may use Theorem \ref{ausReitThm} to provide an alternate proof of Corollary \ref{ballerCor}.
\begin{proof}
Let two indecomposable objects $s, t \in \T{}$ be given. We know from Theorem \ref{ausReitThm} that 
\begin{align*}
\dim{} \homp{}(s,t) + (-1)^d \dim{} \homp(t, \tau_d s) = \sum_{i=0}^d (-1)^i \dim{} \homp{}(P_i, t).
\end{align*}
We examine the sum $\sum_{i=0}^d (-1)^i \dim{} \homp{}(P_i, t)$. Let $P$ be a projective module in $\C{}$. Then we may draw a diagram of morphisms
\begin{center}
\begin{tikzpicture}[line cap = round, line join = round]
\node (a) at (-2, 2) {$\C{}$};
\node (b) at (-2, 0) {$K_0(\mathscr{C})$};
\node (c) at (2, 2) {$\mathbb{Z}$.};

\draw [->] (a) to node[above]{$\dim \homp{}(P, -)$} (c);
\draw [->] (a) to node[right]{$\rho$} (b);
\draw [dashed,->] (b) to node[below]{$k$} (c);
\end{tikzpicture}
\end{center}
To see that the factorisation $k$ exists, we note that the kernel of $\rho$ is generated by the alternating sums over exact sequences in $\C{}$. As $P$ is projective, the functor $\homp{}(P,-)$ is exact and thus $\dim \homp{}(P,-)$ sends such sums to zero. Hence the factorisation $k$ exists and shows that for a projective module $P$ and an indecomposable $t \in \T{}$, the value $\dim \homp{}(P, t)$ is determined by $[t]_\C{} \in K_0(\C{})$. \\

This means that we have determined $\dim \homp{}(s, t)$ by $[t]_\C{}$ for every indecomposable $s \in \T{}$. By \cite[Proposition~3.9]{Reid}, this determines $t$ up to isomorphism.
\end{proof}
\bibliography{mybib}{}

\begin{thebibliography}{10}

\bibitem{AuslanderII}
M.~{Auslander}.
\newblock {Representation theory of Artin algebras II}.
\newblock {\em Comm. Algebra}, 1:269--310, 1974.

\bibitem{AusReit}
M.~{Auslander} and I.~{Reiten}.
\newblock Modules determined by their composition factors.
\newblock {\em Illinois J. Math.}, 29:280--301, 1985.

\bibitem{ARS}
M.~{Auslander}, I.~{Reiten}, and S.~{Smal{$\o$}}.
\newblock {\em ``Representation Theory of Artin Algebras''}.
\newblock Cambridge University Press, Cambridge, 1997.

\bibitem{Bass}
H.~{Bass}.
\newblock {\em ``Algebraic K-theory''}.
\newblock W. A. Benjamin Inc., New York, 1968.

\bibitem{HIO}
M.~{Herschend}, O.~{Iyama}, and S.~{Oppermann}.
\newblock $n$-representation infinite algebras.
\newblock {\em Adv. Math.}, 252:292--342, 2014.

\bibitem{Iyama}
O.~{Iyama}.
\newblock Higher-dimensional {Auslander}-{Reiten} theory on maximal orthogonal
  subcategories.
\newblock {\em Adv. Math.}, 210:22--50, 2007.

\bibitem{IyamaAus}
O.~{Iyama}.
\newblock Cluster tilting for higher {Auslander} algebras.
\newblock {\em Adv. Math.}, 226:1--61, 2011.

\bibitem{Jasso}
G.~{Jasso}.
\newblock $n$-abelian and $n$-exact categories.
\newblock {\em Math. Z.}, 283 (3-4):703 -- 759, 2016.

\bibitem{JassoKvamme}
G.~{Jasso} and S.~{Kvamme}.
\newblock An introduction to higher {Auslander-Reiten} theory.
\newblock {\em Bull. Lond. Math. Soc.}, 51:1--24, 2019.

\bibitem{Mizuno}
Y.~{Mizuno}.
\newblock A {G}abriel-type theorem for cluster tilting.
\newblock {\em Proc. Lond. Math. Soc.}, 108:836--868, 2014.

\bibitem{Palu}
Y.~{Palu}.
\newblock Cluster characters for triangulated 2-calabi-yau categories.
\newblock {\em Ann. Inst. Fourier (Grenoble)}, 58:2221 -- 2248, 2008.

\bibitem{Reid}
J.~{Reid}.
\newblock Indecomposable objects determined by their index in higher
  homological algebra.
\newblock {\em Proc. Amer. Math. Soc.}, 148:2331--2343, 2020.

\end{thebibliography}

\bibliographystyle{plain}
School of Mathematics and Statistics, Newcastle University, Newcastle upon Tyne, NE1 7RU, United Kingdom \\
\textit{Email address}: j.reid4@ncl.ac.uk
\end{document}